\newlength{\originalbase}
\newcommand{\spacing}[1]{\setlength{\baselineskip}{#1\originalbase}}
\newif\ifnotesw\noteswtrue
\newtheorem{theorem}{Theorem}[section]
\newtheorem{lemma}[theorem]{Lemma}
\newtheorem{claim}[theorem]{Claim}
\newtheorem{observation}[theorem]{Observation}
\newtheorem{corollary}[theorem]{Corollary}
\newtheorem*{nonum}{}
\newcommand{\remove}[1]{}
\newcommand{\copwin}{\ \mathscr{C}}
\newcommand{\copmove}{{\rightarrow}}
\newcommand{\robmove}{{\rightarrow}}
\newcommand{\crmove}{{\twoheadrightarrow}}
\newcommand{\st}[2]{({#1}\,;\,{#2})}
\newcommand{\copst}[2]{(\underline{#1}\,;\,{#2})}
\newcommand{\robst}[2]{({#1}\,;\,\underline{#2})}
\newcommand{\bc}{C}
\begin{document}
\spacing{1.4}
\parskip=+3pt

\def\proofend{\hfill$\Box$\medskip}
\def\Proof{\noindent{\bf Proof. }}
\newcommand{\ProofOf}[1]{\noindent{\bf Proof of {#1}. }}

\newenvironment{proofof}[1]{\ProofOf{#1}}{\hfill $\Box$ \medskip}

\def\cN{\overline{N}}
\def\P{{\cal{P}}}
\def\cop{{\mathrm{cop}}}
\def\reg{{\mathrm{reg}}}

\title{The Petersen graph is the smallest 3-cop-win graph}

\author{Andrew Beveridge\thanks{Department of Mathematics, Statistics and Computer Science, Macalester College, Saint Paul MN 55105},\,\,
Paolo Codenotti\thanks{Institute for Mathematics and its Applications, University of Minnesota, Minneapolis MN 55455}, 
Aaron Maurer\thanks{Department of Mathematics, Carleton College, Northfield MN 55057}, 
 John McCauley\thanks{Department of Mathematics, Haverford College, Haverford PA 19041},
and Silviya Valeva\thanks{Department of Mathematics,  University of Iowa, Iowa City IA 52242} 
}

\maketitle

\begin{abstract}
In the game of \emph{cops and robbers} on a graph $G = (V,E)$, $k$
cops try to catch a robber.
On the cop turn, each cop may move to a neighboring vertex or remain
in place. On the robber's turn, he moves similarly.
The cops win if there is some time at which a cop is at the same
vertex as the robber. Otherwise, the robber wins.
The minimum number of cops required to catch the robber is called the
\emph{cop number} of $G$, and is denoted $c(G)$. 
Let $m_k$ be the minimum order of a connected graph satisfying $c(G) \geq k$. 
Recently, Baird and Bonato determined via computer search that $m_3=10$ and that this value is attained uniquely by the Petersen graph. Herein, we give a self-contained mathematical proof of this result. Along the way, we give some characterizations of  graphs with $c(G) >2$ and very high maximum degree.
\end{abstract}

\section{Introduction}

Let $G=(V,E)$ be a simple undirected graph on $n$ vertices. The game of cops and robbers on $G$ was independently introduced by Quilliot \cite{quilliot} and  Nowakowski and Winkler \cite{Nowakowski+Winkler}.
The game is played between $k$ cops $C_1, C_2, \ldots , C_k$ and one robber $R$.
First, the cops are placed at $k$ vertices of the graph. Then
the robber is placed on a vertex. During play, the cops and the robber move alternately. On the cop turn, each cop may move to a neighboring vertex or remain in place. The cops can coordinate their strategy and multiple cops may occupy the same vertex. On the robber's turn, he moves similarly. This is a full information game, in the sense that the locations of the cops and robber are always known to all players. 
The cops win if there is some finite time at which a cop is colocated with the robber. Otherwise, the robber wins.
The minimum number of cops required to catch the robber (regardless of robber's strategy) is called the \emph{cop number} of $G$, and is denoted $c(G)$. When $c(G)=k$, we say that $G$ is \emph{$k$-cop-win}.

The game of cops and robbers has received considerable attention in recent years; 
for an introduction see the surveys \cite{alspach, hahn} and the monograph \cite{bonato+nowakowski}.
The most important open question is Meyniel's conjecture that for every graph $G$ over $n$ vertices, $c(G)=O(\sqrt{n})$. The history of Meyniel's conjecture is surveyed in \cite{baird+bonato}, and the best known bound of
 $c(G) \leq n\, 2^{-(1+o(1))\sqrt{\log n}}$ was obtained independently in \cite{lu+peng,scott+sudakov,FKL}. Various authors have recently studied the cop number for random graph models
\cite{bollobas+kun,luczak+pralat,pralat,bonato+pralat+wang,bonato+kemes+pralat}. In particular,  
Pra{\l}at and Wormald \cite{pralat+wormald} have shown that Meyniel's conjecture holds for the Erd\H{o}s-Renyi random graph model.

The original papers \cite{quilliot, Nowakowski+Winkler} characterized the graphs for which $c(G)=1$. 
Given a vertex $v$, its  \emph{neighborhood} is 
$N(v) = \{ u \in V \mid (v,u) \in E \}$, and its \emph{closed neighborhood}  is
$\cN(v) = \{ v \} \cup N(v)$.  A vertex $v$ is \emph{dominated} by the vertex $w$ if $\overline{N}(v) \subseteq \overline{N}(w)$. A dominated vertex is also called a \emph{pitfall} or \emph{corner}.  A graph $G$ is \emph{dismantleable} if we can reduce $G$ to a single vertex by successively removing dominated vertices. These papers prove that 
a  connected graph $G$  has $c(G)=1$ if and only if $G$ is dismantleable.

Aigner and Fromme \cite{aigner+fromme} introduced the cop number as described above. In addition to proving that if $G$ is planar then $c(G) \leq 3$, they establish the following useful result. Let $\delta(G)$ denote the minimum degree of $G$. Recall that the \emph{girth} $g(G)$ of a graph is the size of its smallest cycle (if $G$ is acyclic then $g(G)$ is infinite). They proved that 
if $G$ is a graph with finite girth $g(G)\geq 5$ then $c(G) \geq \delta(G)$.
The Petersen graph $H$ is a 10 vertex, 3-regular graph of girth 5. Aigner and Fromme's result guarantees that the $c(H) \geq  3$, and it shows a winning 3-cop strategy.

Let $m_k$ denote the minimum order of a connected graph with $c(G) \geq k$.
Clearly, $m_1=1$ and $m_2=4$. Recently, Baird and Bonato \cite{baird+bonato}  used a computer search to prove that  $m_3=10$ and that this value is attained uniquely by the Petersen graph.
We give a self-contained mathematical proof of this result, split into two theorems.

\begin{theorem}\label{thm:9-vtx-c2}
If $G$ is a connected graph on at most $9$ vertices, then $c(G)\leq 2$.
\end{theorem}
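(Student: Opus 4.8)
The plan is to induct on $|V(G)|$ and reduce to \emph{irreducible} graphs, namely those with no dominated vertex. If $v$ is dominated by $w$, then retracting $v$ onto $w$ leaves a connected graph $G-v$ on fewer vertices with $c(G)=c(G-v)$; this is the standard corner-removal fact underlying dismantlability, and connectivity is preserved since every neighbor of $v$ is also adjacent to $w$. Applying this repeatedly, it suffices to prove $c(G)\le 2$ when $G$ is connected, irreducible, and has at most $9$ vertices. In particular I may then assume $\delta(G)\ge 2$ and that no vertex satisfies $\overline{N}(v)\subseteq\overline{N}(w)$ for some $w\neq v$.

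My two main tools would be: (i) the Aigner--Fromme path-guarding lemma, that a single cop can eventually guard any isometric path $P$ so that the robber is caught the instant he steps onto $P$; and (ii) the observation that one cop stationed to dominate the closed neighborhood $\overline{N}(v)$ of a high-degree vertex severely confines the robber. Using (i), the natural strategy is to let cop $1$ guard a diametral isometric path $P$, which confines the robber to a single component $C$ of $G-V(P)$, and then let cop $2$ hunt him inside $C$. This succeeds with two cops precisely when every component of $G-V(P)$ is cop-win (dismantlable), since the robber cannot re-enter the guarded $P$ and so faces a single cop on a cop-win subgraph. Because $|V(G)|\le 9$ and an isometric path already consumes several vertices, the leftover components are small, and this simple argument should dispose of most configurations.

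I would organize the remaining work by girth. If $g(G)\ge 5$, the Moore bound gives $|V(G)|\ge \delta(G)^2+1$, so $\delta(G)\le 2$ on at most $9$ vertices; combined with irreducibility this forces $\delta(G)=2$ and a near-cyclic, very sparse structure that path-guarding handles directly. (This is also where the count becomes tight: at $\delta=3$ and girth $5$ one is forced up to $10$ vertices and the Petersen graph.) If $g(G)\le 4$, a triangle or $4$-cycle together with the small vertex budget keeps the robber's reachable territory small; here I would invoke tool (ii) for the dense cases, where some vertex of very high degree lets a cop guard $\overline{N}(v)$ and leave only a tiny remainder, and fall back on path-guarding otherwise. \textbf{The main obstacle} is exactly this finite but fiddly case analysis in low girth: one must verify that every irreducible $\le 9$-vertex configuration admits a guardable path or neighborhood whose complement splits into cop-win pieces, and rule out the bad case in which the complement contains a non-cop-win component (the smallest being $C_4$). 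Pinning down the high-maximum-degree obstructions---precisely the characterizations the abstract promises---is what should make this casework terminate cleanly.
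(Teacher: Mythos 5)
Your proposal is a plan rather than a proof: the step you yourself label ``the main obstacle''---verifying that every irreducible graph on at most $9$ vertices admits a guardable path or neighborhood whose complement splits into cop-win pieces---is exactly where all of the content lies, and it is left undone. For comparison, the paper does none of this girth-organized casework. It proves a structural result (Lemma~\ref{lemma:gen-5-cycle}: if some vertex $u$ has $\deg(u)\ge n-6$, then either $c(G)\le 2$ or $G[V-\overline{N}(u)]$ is a $5$-cycle) and a corollary (if moreover some $v\in V-\overline{N}(u)$ has $\deg(v)\le 3$, then $c(G)\le 2$), after which Theorem~\ref{thm:9-vtx-c2} is immediate: when $n\le 9$ and $\Delta(G)\ge 4$, the complement of $\overline{N}(u)$ has at most four vertices and so cannot be a $5$-cycle; when $\Delta(G)=3$, every vertex outside $\overline{N}(u)$ automatically has degree at most $3$; and $\Delta(G)\le 2$ gives a path or cycle. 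Your reduction to irreducible graphs and your two tools are sound as far as they go, but they are not a substitute for proving lemmas of this kind.

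Moreover, the one concrete tactic you commit to---cop $1$ guards a \emph{diametral} isometric path $P$, winning provided every component of $G-V(P)$ is cop-win---provably fails on a $9$-vertex graph sitting squarely in your ``girth $\ge 5$, $\delta=2$'' case, which you claim path-guarding ``handles directly.'' Let $G$ be the Petersen graph minus one vertex: it is connected, has girth $5$, minimum degree $2$, and no dominated vertex (girth $\ge 5$ means adjacent vertices share no neighbor and non-adjacent vertices share at most one, so a dominated vertex would need degree $\le 1$), so it survives your irreducibility reduction. Its diameter is $3$, it has exactly six isometric paths on $4$ vertices, and for \emph{every one of them} the remaining five vertices induce a $5$-cycle, which is not cop-win; the robber simply stays on that $C_5$ and your second cop never catches him. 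The idea can be rescued here, but only by guarding a carefully chosen \emph{shorter} path (for instance an isometric $3$-vertex path through three degree-$3$ vertices, whose deletion leaves a tree); nothing in your write-up identifies such a path or proves one exists in general. So the gap is real on two levels: the announced strategy is incorrect as stated, and the finite case analysis that would repair and complete it is deferred rather than carried out.
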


\begin{theorem}
 \label{thm:petersen}
The Petersen graph is the unique connected graph on $10$ vertices that requires 3 cops. All other connected graphs of order $10$ are  2-cop-win.
\end{theorem}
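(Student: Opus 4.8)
The plan is to establish the two halves of the statement separately. That the Petersen graph $H$ satisfies $c(H)=3$ was already recorded in the introduction: since $H$ has girth $5$ and is $3$-regular, the Aigner--Fromme bound gives $c(H)\ge \delta(H)=3$, while an explicit strategy shows $c(H)\le 3$. The substance of the theorem is therefore the \emph{uniqueness} assertion: every connected graph $G$ on $10$ vertices with $c(G)\ge 3$ must be $H$. I would argue this by pinning down enough structure of such a $G$ to force it to be the Petersen graph, while simultaneously exhibiting $2$-cop strategies for all configurations that fall outside this structure.

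The first and most useful reduction is to eliminate dominated vertices. If $v$ is dominated by $w$, then $G-v$ is connected (any path through $v$ reroutes through $w$) and, since retracting a corner does not change the cop number, $c(G)=c(G-v)$; as $G-v$ has $9$ vertices, Theorem~\ref{thm:9-vtx-c2} yields $c(G)=c(G-v)\le 2$. Hence a $3$-cop-win graph on $10$ vertices has \emph{no} dominated vertex. In particular $\delta(G)\ge 2$, and no degree-$2$ vertex may have two adjacent neighbors (that would make it a corner, since its closed neighborhood would sit inside a neighbor's). From here I would run a case analysis on the maximum degree $\Delta(G)$.

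For large $\Delta$ the idea is to confine the robber with a single cop. If a cop occupies a vertex $v$ at the start of the robber's move, the robber may neither sit on nor step into $\overline{N}(v)$, so he is trapped inside the induced subgraph on the $9-\deg(v)$ vertices of $V\setminus\overline{N}(v)$. When $\deg(v)=\Delta$ is large this residual region is tiny, and I would show that a second cop can always corner a robber confined to so few vertices, accounting for the possibility that the region is disconnected or that the robber tries to hug its boundary. Making this precise is exactly the ``characterization of graphs with $c(G)>2$ and very high maximum degree'' promised in the abstract, and it should rule out $3$-cop-win graphs for all $\Delta$ above some small threshold.

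This leaves a narrow window of small maximum degree. Combined with $\delta(G)\ge 2$ and the no-corner condition, I would first force the girth up: a triangle or a $4$-cycle tends either to create a dominated vertex or to admit a guardable isometric subgraph along which one cop neutralizes part of the board while the second finishes, so any surviving $3$-cop-win graph should have girth $g(G)\ge 5$. With girth at least $5$, the no-corner condition forces $\delta(G)\ge 3$, and together with the degree ceiling this pins $G$ down to a $3$-regular graph of girth $5$ on $10$ vertices; the Moore bound $1+3+3\cdot 2 = 10$ then finishes, since equality is attained uniquely by the Petersen graph. The main obstacle I anticipate is the degree case analysis in the middle: the high-degree guarding argument is conceptually clean but becomes delicate when the confined region is not a single vertex, and the elimination of girth-$3$ and girth-$4$ graphs requires genuinely new $2$-cop strategies rather than a single slogan. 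Verifying that two cops suffice across all these families, with no case overlooked, is where the real work—and the risk of a gap—lies.
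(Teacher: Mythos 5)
There is a genuine gap, and it sits exactly at the step that is supposed to finish your argument. The claim ``with girth at least $5$, the no-corner condition forces $\delta(G)\geq 3$'' is backwards: in a triangle-free graph a degree-$2$ vertex is \emph{never} dominated. Indeed, if $\deg(v)=2$ with $N(v)=\{a,b\}$ and $v$ is dominated by $w$, then $v\in \overline{N}(w)$ forces $w\in\{a,b\}$, and then $b\in \overline{N}(v)\subseteq \overline{N}(a)$ (say) forces $a\sim b$, creating a triangle. The $10$-cycle --- girth $10$, $\delta=2$, no corners --- is a concrete counterexample to your implication. So once the girth is at least $5$, corner-elimination tells you nothing about degree-$2$ vertices, your reduction to a $3$-regular graph never happens, and the Moore-bound/cage finish is unreachable. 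This is not a repairable technicality: ruling out degree-$2$ vertices is precisely the hard part of uniqueness. The paper handles it by first forcing $\Delta(G)=3$ (Lemma~\ref{lemma:deg-leq-3}), then forcing $G[V-\overline{N}(u)]$ to be an induced $6$-cycle for a degree-$3$ vertex $u$ (Lemma~\ref{lemma:n-7}), and then playing an explicit two-cop endgame that wins whenever some vertex of that $6$-cycle has degree $2$; only when no such vertex exists is $G$ forced to be $3$-regular, at which point Observation~\ref{fact:petersen} (every co-neighborhood a $6$-cycle), not a girth argument, identifies $G$ as the Petersen graph.

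Beyond this, your two middle reductions are gestures rather than proofs, and they are where the paper's actual content lives. ``A robber confined to few vertices can always be cornered by the second cop'' is false as a slogan --- the obstructions are exactly the induced $5$-cycle and $6$-cycle residuals of Lemmas~\ref{lemma:gen-5-cycle} and~\ref{lemma:n-7}, and even after Lemma~\ref{lemma:gen-5-cycle} the case $n=10$, $\Delta(G)=4$ needs the separate structural analysis of Lemma~\ref{lemma:deg-leq-3}. Likewise ``a triangle or $4$-cycle tends to create a dominated vertex or a guardable subgraph'' is not an argument (the triangular prism has triangles and no corners), and the paper never forces girth $\geq 5$ at all. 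Since all of these lemmas precede Theorem~\ref{thm:petersen}, you could legitimately have cited them instead of sketching them; but even with those holes filled by citation, your plan dead-ends at the false $\delta(G)\geq 3$ claim, because none of your reductions can touch a degree-$2$ vertex in a triangle-free graph.
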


Our proofs follow from  a  series of observations and lemmas about the cop number of graphs with very large maximum degree.  It is obvious that  a graph with a \emph{universal vertex} $v$ such that $\deg(v)=n-1$ is cop-win. We prove some structural results concerning graphs containing a vertex whose codegree is a small constant.

\begin{lemma}\label{lemma:gen-5-cycle}
Let $G$ be a connected graph on $n$ vertices. If there is a node $u\in
V(G)$ of degree at least $n-6$, then either $c(G)\leq 2$ or the induced
subgraph $G[V-\overline{N}(u)]$ is a 5-cycle.
\end{lemma}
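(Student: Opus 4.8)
The plan is to reduce the game on $G$ to a pursuit on the small set $B = V - \overline{N}(u)$, which has $|B|\le 5$ since $\deg(u)\ge n-6$ forces $|\overline{N}(u)|\ge n-5$. The governing observation is that $\overline{N}(u)$ is dominated by $u$: a single cop stationed at $u$ is adjacent to every vertex of $N(u)$, so it simultaneously guards \emph{all} edges leaving $B$. Hence if cop $C_1$ sits on $u$, any robber move into $N(u)$ (or onto $u$) is answered by immediate capture, and the robber is confined to $B$, able to travel only along edges of $G[B]$. The second cop $C_2$ is then free to roam all of $G$, so it suffices to catch, with $C_2$, a robber trapped in a single connected component $K$ of $G[B]$, where $|V(K)|\le 5$.

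First I would dispose of the case that $K$ is cop-win. Since a connected graph is cop-win if and only if it is dismantleable, $C_2$ can win by playing a dismantling strategy inside $K$ (entering $K$ through any $B$--$N(u)$ edge, which exists by connectedness and which $C_2$ can reach by moving through $u$). The only connected graphs on at most five vertices that are \emph{not} cop-win form a short list: $C_4$ on four vertices, and on five vertices $C_5$, $K_{2,3}$, the house, $C_4$ with a pendant, and a few further graphs that dismantle down to $C_4$. Thus the entire content of the lemma lives in the non-cop-win cases, and the claim is that every such $K$ other than $C_5$ still yields $c(G)\le 2$.

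For those cases the single stationed cop no longer suffices, and here I would use $u$ more flexibly. The key is that the robber gains nothing durable by fleeing into $N(u)$: the region $\overline{N}(u)$ is itself cop-win (it has the universal vertex $u$), so a cop returning to $u$ re-confines the robber to $B$. Working case by case on the short list, I would have $C_2$ occupy a vertex of $\overline{N}(u)$ adjacent to several vertices of $K$ (or a suitable vertex of $B$ itself) so as to collapse the robber's reachable territory to a cop-win remainder---typically a path---after which the two cops close out the capture, using $C_1$ on $u$ to keep every remaining exit from $B$ sealed. One checks in each case that the robber cannot profitably oscillate between $K$ and the $u$-dominated blob.

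The main obstacle is exactly this non-cop-win analysis, and in particular isolating $C_5$ as the unique obstruction. For $K=C_5$ the robber survives: two cops are needed merely to sweep the cycle, but any attempt to do so vacates $u$, letting the robber slip into $N(u)$ and re-enter the cycle at a fresh position, resetting the sweep; one must verify that the odd length defeats every cop schedule, whereas $C_4$ and the remaining five-vertex graphs admit a sweep (or a domination shortcut) fast enough to trap the robber before he can reset. Making this dichotomy precise---showing that in all non-$C_5$ cases a good guarding vertex or a short sweep exists, and that no such device works for $C_5$---is the crux of the argument.
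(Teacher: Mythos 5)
Your opening reduction is the same as the paper's: since $\deg(u)\geq n-6$, the set $B=V-\overline{N}(u)$ has at most five vertices, and if $G[B]$ is cop-win then one cop parked at $u$ confines the robber to $B$ while the second cop executes a cop-win strategy there (this is exactly Observation~\ref{obs:key}(a)). You also correctly identify the residual cases: $G[B]$ must contain an induced cycle of length at least $4$, so apart from the excluded $C_5$ it is $C_4$ (possibly with a fifth isolated vertex), $C_4$ plus a pendant, the house, $K_{2,3}$, or $K_{2,3}$ plus an edge --- the paper's cases (a)--(e) in Figure~\ref{fig:4-cycle-cases}. But from that point on you only announce what you ``would'' do, and those cases are not a routine finish: they constitute essentially the entire proof (the paper's Claims~\ref{claim:Rx5bad}, \ref{claim:good-state}, \ref{claim:2-dominating}, \ref{claim:x1-N(u)} and the endgame built on them). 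Deferring them as ``the crux'' means the lemma is not proved.

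Moreover, the device you propose cannot work as described. Keeping $C_1$ on $u$ to ``keep every remaining exit from $B$ sealed'' while $C_2$ collapses territory fails already when $G[B]=C_4$ and exactly one cycle vertex, say $x_1$, has a single neighbor $y\in N(u)$: with $C_1$ pinned at $u$, the robber simply maintains the cycle vertex antipodal to $C_2$ (staying put when $C_2$ is off the cycle), is never adjacent to $C_2$ after his own move, and hence is never captured --- one active cop can pin a robber on an induced $4$-cycle but never catch him, and no vertex of $\overline{N}(u)$ adjacent to ``several vertices of $K$'' exists here. Such graphs \emph{are} 2-cop-win, but every winning strategy requires $C_1$ to leave $u$ (e.g.\ $C_2$ pins at $x_1$, then $C_1$ marches $u\to y\to x_1$); the genuine difficulty, which your ``no profitable oscillation'' sentence waves away, is that the moment $C_1$ steps off $u$ the robber may flee onto $N(u)$, and the ensuing chase depends delicately on which vertices of $N(u)$ see which cycle vertices. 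Handling those configurations is precisely the role of Claims~\ref{claim:good-state} and~\ref{claim:2-dominating}, and nothing in your proposal substitutes for them. Finally, your plan to verify that the robber survives when $K=C_5$ is both unnecessary and false: the lemma asserts a disjunction, so the $5$-cycle case requires no argument at all, and many graphs with $G[B]=C_5$ are in fact 2-cop-win --- for instance whenever some cycle vertex has degree at most $3$ in $G$, by the paper's Corollary~\ref{cor:deg4deg3}.
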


\begin{corollary}
\label{cor:deg=n-5}
If $\Delta(G) \geq n-5$ then $c(G) \leq 2$. \endproof
\end{corollary}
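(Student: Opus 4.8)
The plan is to derive this immediately from Lemma~\ref{lemma:gen-5-cycle} by a vertex count. Let $u$ be a vertex of maximum degree, so $\deg(u) = \Delta(G) \geq n-5$. Since $n-5 \geq n-6$, the hypothesis of Lemma~\ref{lemma:gen-5-cycle} is satisfied, and the lemma's dichotomy applies to $u$: either $c(G) \leq 2$, or the induced subgraph $G[V - \overline{N}(u)]$ is a 5-cycle. So it suffices to rule out the second alternative.

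The key observation is simply that a vertex of such high degree leaves too few vertices uncovered to form a 5-cycle. First I would note that $|\overline{N}(u)| = \deg(u) + 1 \geq (n-5) + 1 = n-4$, whence
\[
|V - \overline{N}(u)| \;\leq\; n - (n-4) \;=\; 4 .
\]
A 5-cycle contains $5$ vertices, so $G[V - \overline{N}(u)]$, having at most $4$ vertices, cannot be a 5-cycle. The second branch of the dichotomy is therefore impossible, leaving $c(G) \leq 2$.

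There is no real obstacle here: the corollary is an immediate consequence of the lemma, and the only content is the elementary inequality $|V - \overline{N}(u)| \leq 4 < 5$ that excludes the 5-cycle case. All the genuine work has already been carried out in the proof of Lemma~\ref{lemma:gen-5-cycle}; this statement merely records the clean threshold $\Delta(G) \geq n-5$ at which the exceptional structure can no longer arise.
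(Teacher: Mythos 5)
Your proof is correct and is precisely the paper's intended argument: the corollary is stated as an immediate consequence of Lemma~\ref{lemma:gen-5-cycle}, since $\deg(u)\geq n-5$ leaves $|V-\overline{N}(u)|\leq 4$ vertices, too few to form a 5-cycle. Nothing is missing.
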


Lemma \ref{lemma:gen-5-cycle} and its immediate corollary are crucial
tools in proving our main results. In particular,
Theorem~\ref{thm:9-vtx-c2} is a quick consequence of Corollary
\ref{cor:deg=n-5}. This reduces our search to 10 vertex graphs with $2
\leq \delta(G) \leq \Delta(G) \leq 4$. It takes some additional effort
to show that $c(G)=3$ forces $\Delta(G)=3$. At that point, we need the
following lemma.

\begin{lemma}
\label{lemma:n-7}
Let $G$ be a graph with a vertex $u$ with $\Delta(G) = \deg(u) = n-7$ and such that
$\deg(v) \leq 3$ for every $v \in V - \cN(u)$. Then either $c(G)\leq 2$ or the induced subgraph $G[V-\cN(u)]$ is a 6-cycle.
\end{lemma}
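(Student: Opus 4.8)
My plan is to send one cop to $u$ and leave it there for the rest of the game, letting the second cop do the catching. Write $S := V - \cN(u)$, so that $\deg(u) = n-7$ gives $|S| = n - (1 + (n-7)) = 6$, and by hypothesis every vertex of $S$ has degree at most $3$ in $G$. With a cop permanently at $u$, the robber can never end a turn on a vertex of $N(u)$ (that cop would capture it on the next move) nor on $u$ itself, so the robber is confined to $S$ and every robber move must run along an edge of the induced subgraph $R := G[S]$. It therefore suffices to show that, whenever $R \neq C_6$, a single additional cop — free to roam all of $G$ — can catch a robber confined to $R$; this immediately yields $c(G) \leq 2$.

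The engine of the argument is a confined version of the corner-removal (dismantling) process recalled in the introduction. Call a vertex $s$ of the current robber territory $S^* \subseteq S$ \emph{removable} if there is a vertex $w \neq s$ of $G$ with $\overline{N}_{R^*}(s) \subseteq \cN(w)$, where $R^* = G[S^*]$ and $\overline{N}_{R^*}(s)$ is the closed neighborhood of $s$ inside $R^*$. If such a $w$ exists, then a cop shadowing the robber from $w$ captures it the instant it enters $s$, so by the standard shadow argument the robber gains nothing by visiting $s$; we may delete $s$ from the territory and continue the analysis on $S^* \setminus \{s\}$. If this process empties the territory (or reduces it to a single vertex, which the second cop then walks onto), the second cop wins. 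The crucial extra leverage over ordinary dismantling of $R$ is that $w$ ranges over all of $V(G)$: it may be a portal $w \in N(u)$, or even a vertex already removed from the territory but still present in $G$. Since $G$ is connected and every non-neighbor of $u$ lies in $S$, such portals necessarily exist, and one portal can simultaneously dominate several vertices of $S$ to which it is adjacent.

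It then remains to show that every corner-free surviving core other than $C_6$ is in fact fully catchable. A first pruning removes the easy vertices: any vertex of the current territory of degree at most $1$, and any degree-$2$ vertex whose two neighbors are adjacent, is dominated and hence removable, so a surviving core has minimum degree at least $2$ and no triangle through a degree-$2$ vertex. On at most six vertices with maximum degree at most $3$, the cores of this kind form a short list — the cycles $C_4$, $C_5$, $C_6$ and the few denser graphs with $\delta \geq 2$ (the prism, $K_{3,3}$, and the theta-graphs) — so the proof reduces to checking finitely many configurations; for the denser ones a removable vertex is produced directly once the enlarged supply of portals is taken into account. The main obstacle is the sparse short-cycle cores, especially an induced $C_4$ or $C_5$ whose vertices spend most of their degree budget inside the cycle: there a single static dominating $w$ need not exist, so one cannot simply delete a vertex, and instead the second cop must be shown to win by an active argument that pins the robber against the cycle using one or two portals (edges from the cycle into $N(u)$ or into already-removed vertices of $S$), whose existence is forced by connectivity together with $\deg(v) \leq 3$. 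The delicate point is to verify that the degree bound never simultaneously closes off every portal the chase requires — equivalently, that a genuinely uncatchable core can arise only when all six vertices are used up as a single induced $6$-cycle. Establishing this dichotomy for each core completes the proof that $c(G) \leq 2$ whenever $G[V - \cN(u)]$ is not a $6$-cycle.
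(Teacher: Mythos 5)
Your opening reduction is where the proof breaks, and it breaks irreparably: it is \emph{not} true that whenever $G[V-\cN(u)]$ is not a $6$-cycle, a single free cop can catch a robber confined to $G[V-\cN(u)]$ while the other cop sits at $u$ forever. Concretely, take $n=13$, let $N(u)=\{y,z,w_1,w_2,w_3,w_4\}$, let $H=G[V-\cN(u)]$ be an induced $5$-cycle $x_1x_2x_3x_4x_5$ together with a pendant vertex $x_6\sim x_1$, and add the portal edges $y\sim x_2$, $y\sim x_4$, $z\sim x_3$, $z\sim x_5$. All hypotheses of the lemma hold ($\deg(u)=6=n-7=\Delta(G)$, and every vertex outside $\cN(u)$ has degree at most $3$), and $H$ is not a $6$-cycle, so the lemma asserts $c(G)\leq 2$. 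Now pin one cop at $u$ and let the robber play only on the $5$-cycle, keeping the invariant that at the end of his turn he is outside the closed neighborhood of the free cop. He can always do so: from $x_i$ his options are $\{x_{i-1},x_i,x_{i+1}\}$, and the only vertex of $G$ whose closed neighborhood contains all three is $x_i$ itself (check each triple: $y$ and $z$ each see two non-consecutive cycle vertices, $x_6$ sees only $x_1$, the cycle is chordless, and e.g.\ $N(x_2)\cap N(x_4)=\{x_3,y\}$ with $y\nsim x_3$). Since the invariant keeps the free cop off and non-adjacent to the robber's vertex, the cop can never reach $x_i$ in one move, so the robber always has a safe option and evades forever; his initial placement is also always possible since any single closed neighborhood misses at least two cycle vertices. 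Hence your claimed sufficiency (``it therefore suffices to show that a single additional cop can catch a robber confined to $R$'') is false, not merely unproven.

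This example is exactly the final case of the paper's proof, and it shows why the paper lets the cop at $u$ move: there the winning line is $\copst{u,x_1}{x_3} \crmove \copst{z,x_2}{x_4}$, in which the first cop descends from $u$ to $z\in N(u)$, and only then is the robber trapped, because $x_4$'s three neighbors $x_3,x_5,y$ are covered by the \emph{two} cops jointly. Parking a cop at $u$ buys confinement of the robber to $H$, but that is a strictly weaker resource than the paper's use of $u$ as a staging vertex from which the first cop joins the endgame; the same issue defeats your plan in the induced-$C_4$ case with an escape vertex (where the paper moves the first cop from $u$ into $H$ in two steps). Your confined-dismantling engine is fine as far as it goes, but it stalls at precisely these sparse cores, and you yourself defer them to an unspecified ``active argument'' by the lone cop --- an argument which, by the evasion strategy above, cannot exist. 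Any correct proof must allow both cops to act in the endgame, which is the actual content of the paper's case analysis of induced $4$-cycles and $5$-cycles in $H$.
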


This lemma can be generalized a bit more. In particular, if we remove
the restriction on the maximum of degree of vertices in $V-\cN(u)$,
then the proofs of Lemmas~\ref{lemma:gen-5-cycle} and~\ref{lemma:n-7}
can be adapted to show that $H$ must contain an induced 5-cycle or
6-cycle. However, the case analysis is cumbersome, so we have opted
for this simpler formulation. The version stated above is sufficient
to prove our main result: that the Petersen graph is the only
10-vertex graph requiring 3 cops.

We conclude this section with some reflections on our main results. The
Petersen graph is the unique 3-regular graph of girth 5 of minimal
size, so that Theorem \ref{thm:petersen} provides a tight lower bound
for $n$ when $c(G)=3$. We wonder whether a similar result holds for
general cop numbers, and we formulate some open question in this
vein. Recall that a $(k,g)$-\emph{cage} is a $k$-regular graph with
girth $g$ of minimal order. For a survey of cages, see
\cite{cage}. The Petersen graph is the unique $(3,5)$-cage, and in
general, cages exist for any pair $k \geq 2$ and $g \geq 3$.  

As discussed earlier, Aigner and Fromme \cite{aigner+fromme} proved
that graphs with girth $5$, and degree $k$ have cop number at least
$k$; in particular, if $G$ is a $(k,5)$-cage then $c(G) \geq k$.  Let
$n(k,g)$ denote the order of a $(k,g)$-cage.  Is it true that a
$(k,5)$-cage is $k$-cop-win?  Next, since we have $m_k \geq
n(k,5)$, it is natural to wonder whether $m_k = n(k,5)$ for $k
\geq 4$. It seems reasonable to expect that this is true at least for
small values of $k$.  It is known that $n(4,5)=19$, $n(5,5)=30$,
$n(6,5)=40$ and $n(7,5)=50$. Do any of these cages attain the
analogous $m_k$?  

More generally, we can ask the same question for large $k$: is $m_k$ achieved by a $(k,5)$-cage?
If Meyniel's conjecture is true, then  $m_k= \Omega(k^2)$. Meanwhile, it is known that $n(k,5) = \Theta(k^2)$, so an affirmative resolution would be consistent with Meyniel's conjecture. We note that
Baird and Bonato
\cite{baird+bonato} have already observed  that Meyniel's conjecture implies that $m_k
= \Theta(k^2)$, using a projective plane construction to obtain the upper bound, rather than the existence of $(k,5)$-cages.


\section{Preliminaries}

\subsection{Definitions and Notation}

For a graph $G$, we will denote by $V(G)$ the vertex set of $G$, and
$E(G)$ the edge set of $G$. If the graph $G$ is clear from context, we
will sometimes use $V$ for $V(G)$ and $E$ for $E(G)$. We also use the
notation $v(G):=|V(G)|$. For $S\subseteq V$, the \emph{induced
  subgraph} on $S$, denoted $G[S]$, is the graph with vertex set $S$,
and edge set $E(G[S])=\{(u, v)\in E(G)\mid u, v\in S\}$.  
For sets $S, T$, we denote the difference of $T$ by $S$ by
$T-S :=\{t\in T\mid t\notin S\}$.

We specify some additional vertex notation. For $u,v \in V$, we write $u\sim
v$ when $(u, v)\in E$.  For $S \subset V$, we write $u \sim
S$ when $u \notin S$ and there exists $v \in S$ such that $u \sim
v$. We define $N(S)= \cup_{v \in S} N(v) - S$ and $\cN(S) = \cup_{v \in S} \cN(v)$. For convenience, we set
$N(u,v) = N(\{u,v\})$. 
For sets of vertices $S, T\subseteq V$, we denote the set of edges
between the two sets by $[S : T] := \{(s, t)\in E\mid s\in S, t\in
T\}$, and we use $|S:T|$ to denote the size of this edge set. We denote the \emph{degree} of a vertex $u$ by $\deg(u)$ and denote the minimum degree by 
$\delta(G) = \min_{v \in V(G)} \deg(v)$ and the maximum degree by $\Delta(G) = \max_{v \in V(G)} \deg(v)$.
We generalize the latter symbol to subsets of vertices: for $S\subseteq V$,
$\Delta(S) =\max_{s\in S} \deg(s)$.

We now introduce our notation for the state of the game. We fix a
connected graph $G$ on which the game is played. The state of the game
is a pair $\st{\bc}{r}$, where $G$ is a connected graph, $\bc$ is a
$k$-tuple of vertices $\bc = (c_1, c_2, \dots, c_k)$, where $c_i\in
V(G)$ is the current position of cop $C_i$, and $r\in V(G)$ is the
current position of the robber $R$.  For notational convenience, we
write $\st{c_1, \dots, c_k}{r}$ for $\st{(c_1, \dots, c_k)}{r}$. When
we need to specify whose turn it is to act, we underline the position
of the player whose turn it is: $\copst{\bc}{r}$ denotes that it is
the cops' turn to move, and $\robst{\bc}{r}$ the robber's.

We use a shorthand notation to describe moves: $\copst{c_1, \dots,
  c_k}{r}\copmove \robst{c'_1, \dots, c'_k}{r}$ denotes the cop
move where each $C_i$ moves from $c_i$ to $c'_i$. Similarly
$\robst{c_1, \dots, c_k}{r}\robmove \copst{c_1, \dots, c_k}{r'}$
denotes the robber's move from $r$ to $r'$. We will concatenate moves
and we use the shorthand $\crmove$, meaning a cop move followed by a
robber move:
\[\copst{c_1, \dots, c_k}{r}\crmove \copst{c'_1, \dots, c'_k}{r'} 
\equiv \copst{c_1, \dots, c_k}{r}\copmove
\robst{c'_1, \dots, c'_k}{r}\robmove\copst{c'_1, \dots,
  c'_k}{r'}. \] There will be cases where the strategy allows for
either the robber or the cops to be in one of several positions. In
general, for $T_i\subseteq V$, $S\subseteq V$, the state of the game
has the form $\st{T_1, \dots, T_k}{S}$ means that $c_i\in T_i$, and
$r\in S$.

The robber's \emph{safe neighborhood}, denoted $S(R)$, is the
connected component of $G-\cN(C)$
containing the robber. We say that the robber is \emph{trapped} when
$S(R) = \emptyset$. This condition is equivalent to having both $r \in
N(\bc)$ and $N(r) \subset \cN(\bc)$. Once the robber is trapped, he will
be caught on the subsequent cop move, regardless of the robber's next
action. When the robber is trapped, we are in  a \emph{cop-winning position}, denoted by
$\copwin$.

\subsection{The end game}

We frequently use the following facts to identify cop-win
strategies for two cops in the end game. 
We state a more general version of these results
for $k$ cops. 

We need the following property of a cop-win graphs, which first appears in \cite{clarke+nowakowski}. Every cop-win graph has at least one cop-winning \emph{no-backtrack strategy}, meaning that the cop never repeats a vertex during the pursuit. Typically, a graph has multiple no-backtrack strategies. We say that a vertex $v$ is \emph{no-backtrack-winning} if there is a cop-winning no-backtrack strategy starting at $v$. For example, when $G$ is a tree, every vertex is no-backtrack-winning.

Next we fix some notation. For a fixed set $U = \{ u_1, \dots, u_t \}
\subset V$, let $N_U'(u_j) := N(u_j) - \cN(U - u_{j} )$ be the
neighbors of $u_j$ that are not adjacent to any other vertex in $U$.

\begin{observation}\label{obs:key}
Let $\copst{\bc}{r}$ be the state of the game.  Suppose that  there exists a $c_j \in \bc$ such that either
\begin{inparaenum}[\upshape(\itshape a\upshape)]
\item \label{key:empty} $[S(R): N'_C(c_j)] = \emptyset$ and $G[S(R)]$ is cop-win; or
\item \label{key:one} 
$N(S(R)) \cap   N'_C(c_j) = \{ v \}$ such that $H=G[S(R) + v]$ is cop-win and $v$ 
is no-backtrack-winning in $H$;
  \end{inparaenum} 
  then the cops can win from this configuration.
\end{observation}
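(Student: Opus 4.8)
The plan is to freeze all cops except $c_j$, use the frozen cops to confine the robber to (essentially) $S(R)$, and then let $c_j$ execute a cop-win strategy on a small induced subgraph. Before splitting into cases I would record one confinement fact. Since $S(R)$ is a connected component of $G-\cN(C)$, every boundary vertex lies in $\cN(C)$, so $N(S(R))\subseteq \cN(C-c_j)\cup\{c_j\}\cup N'_C(c_j)$. No vertex of $S(R)$ is adjacent to any cop (such a vertex would lie in $\cN(C)$), so $c_j\notin N(S(R))$; hence the only boundary vertices guarded \emph{exclusively} by $c_j$ are those in $N(S(R))\cap N'_C(c_j)$.

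\textbf{Case (a).} Here $[S(R):N'_C(c_j)]=\emptyset$ gives $N(S(R))\cap N'_C(c_j)=\emptyset$, so $N(S(R))\subseteq \cN(C-c_j)$. Freezing $C-c_j$ therefore deletes all of $N(S(R))$, which shows that $S(R)$ is exactly the component of $r$ in $G-\cN(C-c_j)$, so the robber can never leave $S(R)$, whatever $c_j$ does. Since $G$ is connected, I walk $c_j$ to a boundary vertex $b\in N(S(R))$ and then into $S(R)$; because $b\in\cN(C-c_j)$ it stays guarded by a frozen cop, so stepping off it creates no leak. Once $c_j$ is inside $S(R)$, I invoke that $G[S(R)]$ is cop-win (cop-win graphs are cop-win from any start) to catch the robber, who is confined to $S(R)$.

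\textbf{Case (b).} Now exactly one boundary vertex $v$ is private to $c_j$, so freezing alone no longer seals the region. I first move $c_j$ to $v$ (legal since $v\in N(c_j)$). Then $N(S(R))\subseteq \cN(C-c_j)\cup\{v\}\subseteq \cN(C-c_j)\cup\cN(v)$, so with the other cops frozen and $c_j$ on $v$ the robber is confined to $S(R)\cup\{v\}=V(H)$, and $v$ is the unique possible leak. I then run the no-backtrack cop-win strategy for $H=G[S(R)+v]$ starting at $v$ on the cop $c_j$. Because $H$ is induced, every move of this strategy is legal in $G$, and the robber's safe moves within $S(R)$ coincide with his moves in $H$, while his only other moves (into $\cN(C-c_j)$) lose immediately. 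The decisive point is that a no-backtrack strategy never returns to $v$, so after $c_j$ leaves $v$ it could never capture a robber sitting on $v$; since the strategy is winning against all robber play, the robber can never safely enter $v$ after the cop's departure. Consequently the robber never reaches $v$, never uses the edges of $v$ leaving $V(H)$, and is caught inside $S(R)$.

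The main obstacle is the final step of case (b): making precise that ``no-backtrack-winning from $v$'' exactly seals the single escape vertex, i.e.\ arguing that a winning no-backtrack strategy forbids the robber from ever re-occupying the start vertex $v$, and verifying that the $G$-game faithfully simulates the $H$-game so that this guarantee transfers. The remaining bookkeeping (confinement and reachability of $S(R)$ in case (a)) is routine.
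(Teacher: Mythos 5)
Your proposal is correct and takes essentially the same approach as the paper: freeze every cop except $C_j$, and have $C_j$ either enter $S(R)$ and run a cop-win strategy on $G[S(R)]$ (case (a)) or move to $v$ and run the no-backtrack strategy on $G[S(R)+v]$ (case (b)). In fact, you make explicit a point the paper merely asserts---that a winning no-backtrack strategy starting at $v$ could never capture a robber who camps on $v$, so the winning property forces the robber never to reach $v$ safely---so your write-up is, if anything, more complete than the paper's.
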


\begin{proof}
Let $S=S(R)$ be the initial safe neighborhood of $R$. In both cases, only cop $C_j$ is active, while the others remain stationary.   In case (a), cop $C_j$ moves into $S$ and follows a cop-win strategy on $G[S]$. 
In case (b),  cop $C_j$ moves  to $v$ and then follows a no-backtrack strategy on $G[S + v]$.  This prevents the robber from ever reaching $v$. In both cases, the only way for the robber to avoid capture by $C_j$ is to step into the neighborhood of the remaining cops.
\end{proof}

We highlight two useful consequences that are  used heavily for $k=2$ in our subsequent proofs.

\begin{corollary}\label{cor:small-safe-nbhd}
Let $\copst{\bc}{r}$ be the state of the game, played with $k \geq 2$
cops. If $|S(R)|\leq 2$ and $|N(S(R))|\leq 2k-1$, then the cops can
win.
\end{corollary}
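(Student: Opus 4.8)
The plan is to reduce Corollary~\ref{cor:small-safe-nbhd} to an application of Observation~\ref{obs:key}. The hypotheses give a small safe neighborhood $S = S(R)$ with $|S| \leq 2$ and a small boundary $|N(S)| \leq 2k-1$. The key counting idea is a pigeonhole argument: there are $k$ cops but only at most $2k-1$ vertices in $N(S)$, so I expect that some cop $C_j$ can be assigned responsibility for the portion of the boundary it controls in such a way that Observation~\ref{obs:key} applies for that $C_j$.

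First I would recall that $N'_C(c_j) = N(c_j) - \cN(C - c_j)$ consists of the neighbors of $c_j$ not dominated by any other cop; the sets $N'_C(c_j)$ for distinct $j$ are disjoint, and together with the cop vertices they partition the part of $\cN(C)$ relevant to walling off $S$. Since $S$ is a connected component of $G - \cN(C)$, every vertex of $N(S)$ lies in $\cN(C)$, and in fact in $N(C)$ (a cop vertex cannot be adjacent to its own safe component and simultaneously lie outside it in the naive sense — I would check the exact bookkeeping here). The crux is then to show that the $\leq 2k-1$ boundary vertices cannot ``block'' all $k$ cops from having either an empty interface (case (a)) or a single-vertex interface $\{v\}$ (case (b)) with $S$.

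Next I would carry out the pigeonhole precisely. Consider the edges $[S : N'_C(c_j)]$ contributed by each cop. If for some $j$ we have $[S : N'_C(c_j)] = \emptyset$, and the tiny graph $G[S]$ (on at most two vertices) is cop-win, we are in case (a) and done — note any graph on at most $2$ vertices is trivially cop-win. Otherwise every cop contributes at least one boundary vertex of $N(S)$ lying in its private set $N'_C(c_j)$; since these private sets are disjoint, that already accounts for $k$ distinct vertices of $N(S)$. Because $|N(S)| \leq 2k-1 < 2k$, the remaining budget of boundary vertices is at most $k-1$, so strictly fewer than $k$ cops can afford to contribute a \emph{second} private boundary vertex. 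Hence some cop $C_j$ contributes exactly one vertex $v$ to $N(S)$ through $N'_C(c_j)$, i.e. $N(S) \cap N'_C(c_j) = \{v\}$. To finish this case I must verify the cop-win and no-backtrack hypotheses of case (b) for $H = G[S + v]$: here $H$ has at most three vertices and contains the vertex $v$ adjacent to $S$, so $H$ is a path or triangle-like graph on $\leq 3$ vertices, each of which is no-backtrack-winning, giving case (b).

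The main obstacle I anticipate is the careful bookkeeping of which boundary vertices land in the \emph{private} sets $N'_C(c_j)$ versus being shared among several cops, since a single vertex of $N(S)$ adjacent to two cops belongs to no $N'_C(c_j)$ and so does not get counted against any single cop. I would need to argue that such shared boundary vertices only help the cops (they further constrain the robber without costing any cop its eligibility for Observation~\ref{obs:key}), so the worst case for the argument is when every boundary vertex is private to exactly one cop — precisely the regime the pigeonhole bound $|N(S)| \leq 2k-1$ is tuned to defeat. Verifying that the degenerate small graphs $G[S]$ and $G[S+v]$ genuinely satisfy the cop-win and no-backtrack conditions is routine given $|S| \leq 2$, so the real content is the disjointness-plus-pigeonhole step isolating a cop with interface size $0$ or $1$.
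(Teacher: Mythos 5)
Your proof is correct and takes essentially the same route as the paper: a pigeonhole argument over the pairwise-disjoint private sets $N'_C(c_j)$, using $|N(S(R))|\leq 2k-1 < 2k$ to isolate a cop $C_j$ with $|N(S(R))\cap N'_C(c_j)|\leq 1$, and then Observation~\ref{obs:key} applied to the connected graphs $G[S]$ or $G[S+v]$ on at most $2$ or $3$ vertices. You merely spell out the disjointness bookkeeping, the handling of shared boundary vertices, and the cop-win/no-backtrack verifications that the paper's one-line proof leaves implicit.
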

\begin{proof}
Let $S=S(R)$.
  We have  $|N(S)\cap
  N(\bc)|\leq 2k-1$, so the pigeon hole principle ensures that  there exists a cop
  $C_j$ such that and $|N(S)\cap N'_C(c_j)|\leq 1$. We are done by Observation~\ref{obs:key}, since every vertex of a connected $2$-vertex graph is non-backtrack-winning.
%
\end{proof}

\begin{corollary}\label{cor:one-deg-3}
Let $\copst{\bc}{r}$ be the state of the game, played with $k \geq 2$ cops. If 
$\max_{v \in S(R)} \deg_G(v) \leq 3$ and $S(R)$ 
 contains at most one vertex of degree $3$, then the cops can win. \endproof
\end{corollary}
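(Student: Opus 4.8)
The plan is to read off the structure of the safe neighborhood $S = S(R)$ and then apply Observation~\ref{obs:key} with the help of a pigeonhole count. First I would record two facts. Since $S$ is a connected component of $G - \cN(C)$, no vertex of $S$ is adjacent to a cop, so every edge leaving $S$ must land in $N(C)$; that is, $N(S) \subseteq N(C)$. Because $G$ is connected and the cops occupy at least one vertex, at least one such edge exists, so $|S:N(C)| \geq 1$. Next I would bound $|S:N(C)|$ from above by a degree count: the hypothesis gives $\sum_{s \in S}\deg_G(s) \leq 2|S| + 1$, while connectivity of $G[S]$ forces at least $|S|-1$ internal edges, each contributing $2$ to this sum; subtracting shows that at most $3$ edges leave $S$. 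Moreover, if $G[S]$ contains a cycle it has at least $|S|$ internal edges, and the same computation then forces exactly one edge out of $S$. Since $G[S]$ is subcubic with at most one vertex of degree $3$, it is a path, a subdivided $K_{1,3}$ (spider), a cycle, or a cycle with one attached path (lollipop).

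The combinatorial heart is the pigeonhole step. The sets $N'_C(c_j)$ are pairwise disjoint, and any external neighbor adjacent to two or more cops lies in none of them, so $\sum_{j} |N(S) \cap N'_C(c_j)| \leq |N(S)| \leq 3$. With $k \geq 2$ cops, some $c_j$ therefore satisfies $|N(S) \cap N'_C(c_j)| \leq 1$. When $G[S]$ is a tree (path or spider) it is dismantleable, hence cop-win, and every vertex of a tree is no-backtrack-winning. If the chosen cop has no private external neighbor I would invoke Observation~\ref{obs:key}(\emph{a}); if it has exactly one, $\{v\}$, I would use part (\emph{b}), provided $G[S+v]$ is still cop-win with $v$ no-backtrack-winning. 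The one subtlety is that $v$ may be joined to $S$ by two or three edges, closing a cycle in $G[S+v]$; but a single external neighbor carrying two edges already consumes most of the budget of $3$, so one checks that either some cop has \emph{no} private external neighbor (allowing (\emph{a}) with the cop-win tree $G[S]$) or some private external neighbor is attached by a single edge, hence is a leaf of a tree (allowing (\emph{b})). This disposes of all cases in which $G[S]$ is a tree.

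The genuine obstacle is the case where $G[S]$ itself contains a cycle of length at least $4$ (a long cycle or a long lollipop). Here $G[S]$, and also $G[S+v]$, is not cop-win, so a single active cop cannot finish and Observation~\ref{obs:key} does not apply. This is precisely the case forced to have a unique edge $s^{*}\sim v$ leaving $S$, and I would resolve it with the second cop by a guard-and-sweep: one cop moves to $v$, which guards $s^{*}$ and confines the robber to $S$ with its single cut vertex removed, effectively turning the cycle into a path; the second cop then enters through $v \to s^{*}$ and sweeps this path, cornering the robber against the guarded vertex. (A cycle of length $3$, or a lollipop whose cycle is a triangle, is dismantleable and falls back into the tree argument.) I expect the delicate part to be making this sweep precise — verifying that the robber can never slip past the guarded vertex while the second cop advances — and this is exactly where the hypothesis $k \geq 2$ is essential, since the configuration cannot be finished by a single cop.
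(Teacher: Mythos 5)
Your overall architecture matches the paper's: bound the number of edges leaving $S$ by $3$ via the degree count, pigeonhole a cop with at most one private exit, dispose of the tree case through Observation~\ref{obs:key}, and give a dedicated two-cop strategy when $G[S]$ contains a cycle. Your tree case is sound, and in fact more careful than the paper's terse treatment: the multi-edge external neighbor issue you raise is real, and your dichotomy (either some cop has no private external neighbor, or some cop's unique private external neighbor is attached by a single edge) resolves it correctly.

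However, there is a genuine gap in the cyclic case, and it is not the delicacy you flagged. You correctly deduce that exactly one edge $s^*\sim v$ leaves $S$, and you correctly list the two possible shapes: a cycle or a lollipop. But in the lollipop case the exit vertex $s^*$ is forced to be the far endpoint of the pendant path, not a cycle vertex: the attachment vertex already has degree $3$ inside $G[S]$, so it cannot carry the exit edge, and every other vertex of $S$ has degree at most $2$ in $G$, so only the tail's endpoint can. Consequently, guarding $s^*$ from $v$ does \emph{not} turn the robber's territory into a path: $S - s^*$ still contains the entire cycle. Your sweeper walks down the tail, reaches the attachment vertex, and then faces the robber alone on a cycle of length at least $4$, which a single cop can never clear, while your first cop sits permanently at $v$ accomplishing nothing; the robber is never cornered against $s^*$ because he never needs to approach it. The repair is what the paper does: send \emph{both} cops to the unique degree-$3$ vertex $u$ of $G[S]$ (the attachment vertex in the lollipop case, the exit vertex in the pure cycle case). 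The robber cannot slip past two cops advancing together, and once both stand at $u$, the robber's safe neighborhood is a path, so Observation~\ref{obs:key} finishes; only at that point may one cop hold $u$ as a guard while the other sweeps.
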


\begin{proof}
Let $S=S(R)$. Since $G[S]$ is connected, we have $[S : N(C)] \leq 3$. Therefore, some cop $C_j$ has $|S: N'_C(c_j)| \leq 1$. If $G[S]$ is a tree, then we are done by Observation~\ref{obs:key}.
If $G[S]$ is not a tree, then $G[S]$ must be unicyclic with one degree 3 vertex, say $u$. Therefore,  
$|S: N(C)|=1$, and except for $u$, every vertex in the cycle has degree 2 in $G$. A cop-winning strategy is as follows:  two cops move until they both reach $u$. Now $S(R)$ is a path, so  Observation~\ref{obs:key} completes the proof.
\end{proof}


\section{Graphs with $\Delta(G) \geq n-6$}

In this section, we prove Lemma \ref{lemma:gen-5-cycle} and Theorem \ref{thm:9-vtx-c2}. We also make progress on the proof of Theorem \ref{thm:petersen} by showing that if $v(G)=10$ and $\Delta(G)=4$, then $c(G)\leq 2$. For convenience, we recall the statements of these results prior to their respective proofs.

\begin{nonum}[{\bf Lemma \ref{lemma:gen-5-cycle}}]
Let $G$ be a connected graph on $n$ vertices. If there is a node $u\in
V(G)$ of degree at least $n-6$, then either $c(G)\leq 2$ or the induced
subgraph $G[V-\overline{N}(u)]$ is a 5-cycle. 
\end{nonum}

\begin{proof}
Let $H=G[V-\cN(u)]$. By Observation~\ref{obs:key}(\ref{key:empty}), if
$H$ is cop-win, then $c(G)\leq 2$. In particular this holds if $H$
does not contain an induced cycle of length at least 4. So we only
need to consider the case where $v(H) = 4$ or $5$, and $H$ contains an
induced $4$-cycle.  Let $x_1, x_2, x_3, x_4$ form the 4-cycle in $H$
(in that order). Let $x_5$ be the additional vertex (if present).

We now distinguish some cases based on $N(x_5)\cap H$. If $x_5\sim x_i$
for every $i\in\{1, 2, 3, 4\}$, then $H$ is cop-win, and hence
$c(G)\leq 2$. This leaves us with 5 cases to consider, depicted in
Figure~\ref{fig:4-cycle-cases}. Case (a) includes the situation when 
$\deg(u)=n-5$, and there is no vertex $x_5$.

\begin{figure}[ht]
\begin{center}
\begin{tabular}{ccccc}

\begin{tikzpicture}[scale=.85]

\path(0,0) coordinate (X3);
\path(1,1) coordinate (X2);
\path(1,-1) coordinate (X4);
\path(2,0) coordinate (X1);
\path(1,0) coordinate (X5);

\foreach \i in {1,2,3,4,5}
{
\draw[fill] (X\i) circle (2pt);
}

\node at (2.25,-.25) {$x_1$};
\node[above] at (X2) {$x_2$};
\node at (-.2,-.25) {$x_3$};
\node[below] at (X4) {$x_4$};
\node at (.75,-.25) {$x_5$};

\draw (X1) -- (X2) -- (X3) -- (X4) -- (X1);

\end{tikzpicture}

&
\begin{tikzpicture}[scale=.85]

\path(0,0) coordinate (X3);
\path(1,1) coordinate (X2);
\path(1,-1) coordinate (X4);
\path(2,0) coordinate (X1);
\path(1,0) coordinate (X5);

\foreach \i in {1,2,3,4,5}
{
\draw[fill] (X\i) circle (2pt);
}

\node at (2.25,-.25) {$x_1$};
\node[above] at (X2) {$x_2$};
\node at (-.2,-.25) {$x_3$};
\node[below] at (X4) {$x_4$};
\node at (.75,-.25) {$x_5$};

\draw (X1) -- (X2) -- (X3) -- (X4) -- (X1);

\draw (X2) -- (X5);

\end{tikzpicture}

&
\begin{tikzpicture}[scale=.85]

\path(0,0) coordinate (X3);
\path(1,1) coordinate (X2);
\path(1,-1) coordinate (X4);
\path(2,0) coordinate (X1);
\path(1,0) coordinate (X5);

\foreach \i in {1,2,3,4,5}
{
\draw[fill] (X\i) circle (2pt);
}

\node at (2.25,-.25) {$x_1$};
\node[above] at (X2) {$x_2$};
\node at (-.2,-.25) {$x_3$};
\node[below] at (X4) {$x_4$};
\node at (.75,-.25) {$x_5$};

\draw (X1) -- (X2) -- (X3) -- (X4) -- (X1);

\draw (X1) -- (X5);
\draw (X2) -- (X5);

\end{tikzpicture}
&

\begin{tikzpicture}[scale=.85]

\path(0,0) coordinate (X3);
\path(1,1) coordinate (X2);
\path(1,-1) coordinate (X4);
\path(2,0) coordinate (X1);
\path(1,0) coordinate (X5);

\foreach \i in {1,2,3,4,5}
{
\draw[fill] (X\i) circle (2pt);
}

\node at (2.25,-.25) {$x_1$};
\node[above] at (X2) {$x_2$};
\node at (-.2,-.25) {$x_3$};
\node[below] at (X4) {$x_4$};
\node at (.75,-.25) {$x_5$};

\draw (X1) -- (X2) -- (X3) -- (X4) -- (X1);

\draw (X2) -- (X4);

\end{tikzpicture}
&
\begin{tikzpicture}[scale=.85]

\path(0,0) coordinate (X3);
\path(1,1) coordinate (X2);
\path(1,-1) coordinate (X4);
\path(2,0) coordinate (X1);
\path(1,0) coordinate (X5);

\foreach \i in {1,2,3,4,5}
{
\draw[fill] (X\i) circle (2pt);
}

\node at (2.25,-.25) {$x_1$};
\node[above] at (X2) {$x_2$};
\node at (-.2,-.25) {$x_3$};
\node[below] at (X4) {$x_4$};
\node at (.75,-.25) {$x_5$};

\draw (X1) -- (X2) -- (X3) -- (X4) -- (X1);

\draw (X1) -- (X5);
\draw (X2) -- (X5);
\draw (X4) -- (X5);

\end{tikzpicture}

\\
(a) & (b) & (c) & (d) & (e) 

\end{tabular}

\caption{The five cases for $G[V - \cN(u)]$ in  Lemma~\ref{lemma:gen-5-cycle}.}

\label{fig:4-cycle-cases}
\end{center}
\end{figure}
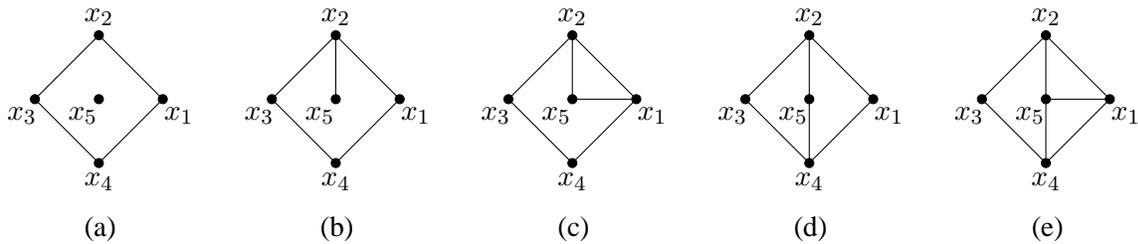

First we make some technical observations. We start by noting that
moving to $x_5$ is in most situations a bad idea for the robber in
Cases (a), (b) and (c).

\begin{claim}\label{claim:Rx5bad}
In Cases (a), (b), and (c), if the state of the game is of the form 
$\copst{\cN(u), V(H)}{x_5}$, the cops have a winning strategy.
\end{claim}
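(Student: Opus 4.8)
My plan is to let one cop lock the robber inside $H=G[V-\cN(u)]$ while the second cop runs him down in this tiny graph. Call $C_1$ the cop that starts in $\cN(u)$ and $C_2$ the one that starts in $V(H)$. The opening cop move is $C_1\to u$ (legal in a single step, since $C_1\in\cN(u)$), and $C_1$ then stays put forever. The key preliminary fact is the resulting \emph{confinement}: since $\cN(u)$ and $V(H)$ partition $V$ and $u$ has no neighbor in $V(H)$, a robber sitting in $V(H)$ can leave it only by stepping to some $w\in N(u)$, whereupon $C_1$ captures him with $u\to w$. Hence, once $C_1$ occupies $u$, the robber is permanently confined to $V(H)$ and can survive only by moving along edges of $H$; it remains to catch such a robber, currently at $x_5$, using $C_2$ alone.

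The mechanism for $C_2$ is that $x_5$ is a corner of $H$ in each case: it is isolated in Case~(a), dominated by $x_2$ in Case~(b), and dominated by $x_1$ (and by $x_2$) in Case~(c). I would show that once $C_1$ is at $u$ and $C_2$ reaches such a dominator $d$, the robber at $x_5$ is \emph{trapped}: since $x_5\sim d$ we have $x_5\in N(\bc)$, and every neighbor of $x_5$ lies either in $N_H(x_5)\subseteq\cN_H(d)\subseteq\cN(d)$ or in $N(u)\subseteq\cN(u)$, so $N(x_5)\subseteq\cN(\bc)$. This is exactly the trapped condition, giving a cop-winning position $\copwin$, and the robber falls on the next cop move. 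Thus everything reduces to steering $C_2$ onto a dominator of $x_5$ quickly enough.

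Case~(a) is easiest: with $x_5$ isolated in $H$ the confined robber has no legal move inside $H$, so he is frozen at $x_5$, and since $G$ is connected $C_2$ simply follows a shortest path to $x_5$ (routing through $u$ if convenient) and captures. In Cases~(b) and~(c), $C_2$ starts on the $4$-cycle $x_1x_2x_3x_4$ (if it starts at $x_5$ the robber is already caught). Here a dominator is always close: in Case~(c) the dominators $x_1,x_2$ are each within one step of every cycle vertex, and in Case~(b) the unique dominator $x_2$ is within one step of $x_1,x_2,x_3$. From every one of these positions the opening move $C_1\to u$, $C_2\to d$ already traps the robber by the previous paragraph.

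The one genuinely dynamic situation, and the step I expect to demand the most care, is Case~(b) with $C_2$ at $x_4$, the vertex opposite the lone dominator $x_2$, so that $C_2$ needs two moves. I would answer $C_1\to u$, $C_2\to x_1$; the robber's only escape from $x_5$ inside $H$ is to $x_2$, so on his turn he either waits at $x_5$---and then $C_2\to x_2$ traps him---or moves to $x_2$, where $C_2\to x_2$ captures him. More broadly, the real obstacle everywhere is this timing: I must rule out the robber slipping onto the $4$-cycle and circling it forever, since a single cop cannot catch a robber on $C_4$. This is precisely what the corner structure prevents, because every route from $x_5$ onto the cycle passes through the bottleneck $N_H(x_5)\subseteq\{x_1,x_2\}$, which $C_2$---starting on the cycle---can always occupy or cover in time.
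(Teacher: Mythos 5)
Your proposal is correct and follows essentially the same strategy as the paper: $C_1$ moves to $u$ to confine the robber to $H$, and $C_2$ moves to a vertex dominating $x_5$ ($x_2$ in Case (b), possibly via $x_1$ when starting from $x_4$; $x_1$ or $x_2$ in Case (c)), trapping the robber. The only cosmetic difference is in Case (a), where the paper cites Corollary~\ref{cor:small-safe-nbhd} while you argue directly that the frozen robber can be walked down, and that you spell out the confinement and trapping conditions the paper leaves implicit.
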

\begin{proof}
$C_1$ moves to $u$. In Case (a), $S(R) = \{x_5\}$ and we are already
  done by Corollary~\ref{cor:small-safe-nbhd}. In Case (b), if possible,
  $C_2$ moves directly to $x_2$; otherwise $C_2$ moves first to $x_1$
  and then to $x_2$; in either case the robber is trapped at $x_5$. In
  Case (c), $C_2$ moves to $x_2$ or $x_1$ (whichever $c_2$ is adjacent
  to), again trapping the robber in $x_5$.
\end{proof}

Next we look at the structure of $N(y)\cap V(H)$ for nodes $y\in
N(u)$.

\begin{claim}\label{claim:good-state}
Suppose the state of the game has the form $\copst{\cN(u), \{x_1,
  x_3\}}{y}$, where $y\in N(u)$ is such that
either \begin{inparaenum}[\upshape(\itshape a\upshape)]
\item \label{case:both} $N(y)\cap V(H)
  = \{x_2, x_4\}$, or \item \label{case:one}$y$ is adjacent to at most one of $x_2$ or
  $x_4$; \end{inparaenum} then the cops have a winning strategy.
\end{claim}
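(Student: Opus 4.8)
The plan is to trap the robber at $y$ in a single coordinated cop move. Since $C_1\in\overline{N}(u)$, after one step I may assume $C_1=u$, so that $C_1$ dominates all of $\overline{N}(u)$. Because $y\in N(u)$, we have $y\sim u=C_1$, and every neighbor of $y$ lies in $\overline{N}(u)\cup V(H)$; all of $N(y)\cap\overline{N}(u)$ is already dominated by $C_1$. Hence the robber is trapped the instant $C_2$ sits on a vertex whose closed neighborhood contains $N(y)\cap V(H)$, and it remains only to produce such a move for $C_2$ in each case. I will use throughout the cycle geometry: in the $4$-cycle $x_1x_2x_3x_4$, both $x_2$ and $x_4$ are adjacent to each of $x_1$ and $x_3$, so $C_2$ can reach either of $x_2,x_4$ in one step from either of its allowed positions $x_1,x_3$.

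Case (a) is immediate. Here $N(y)\cap V(H)=\{x_2,x_4\}$, and since $x_2,x_4\in N(x_1)\cap N(x_3)$, the cop $C_2$ already dominates both of them from $x_1$ or from $x_3$. So $C_2$ stays put while $C_1\to u$, and now $N(y)\subseteq\overline{N}(\{u,c_2\})$ with $y\sim u$, i.e.\ the robber is trapped and caught on the following move.

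Case (b) is where the hypothesis does its work. The point is that a cop at $x_2$ dominates $\{x_1,x_2,x_3\}$ and a cop at $x_4$ dominates $\{x_1,x_3,x_4\}$; in particular each of the two corners $x_2,x_4$ dominates \emph{both} $x_1$ and $x_3$, and differs only in which of $x_2,x_4$ it misses. I would therefore move $C_2$ to the (unique, if it exists) corner $z\in\{x_2,x_4\}$ that is adjacent to $y$, and to either corner otherwise. Since by hypothesis $y$ is adjacent to at most one of $x_2,x_4$, the corner that $z$ fails to dominate is then a non-neighbor of $y$, so $\overline{N}(z)$ contains every neighbor of $y$ in $\{x_1,x_2,x_3,x_4\}$. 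Together with $C_1=u$ this traps the robber exactly as in case (a). Note that springing the trap uses a single simultaneous cop move, after which the robber has no safe neighbor, so moving $C_2$ off its corner never gives the robber an intermediate escape.

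The main obstacle is the possible fifth vertex $x_5$: if $y\sim x_5$ and the chosen corner $z$ does not dominate $x_5$, the robber can flee to $x_5$ instead of being trapped. I would dispatch this by choosing $z$ to cover $x_5$ as well whenever the adjacencies of the current configuration permit (for instance $z=x_2$ in Case~(b), where $x_5\sim x_2$), and otherwise by appealing to Claim~\ref{claim:Rx5bad}, which already shows that a robber who reaches $x_5$ in Cases~(a)--(c) is caught. The symmetric situation in which $C_2$ begins at $x_3$ rather than $x_1$ follows verbatim after interchanging $x_1\leftrightarrow x_3$ and $x_2\leftrightarrow x_4$. Should one prefer to avoid ad hoc handling of $x_5$, the cornered positions that arise have safe neighborhood of size at most two, so Corollary~\ref{cor:small-safe-nbhd} (or Observation~\ref{obs:key}) can be invoked to finish.
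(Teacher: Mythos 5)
Your proof is correct and takes essentially the same route as the paper's: move $C_1$ to $u$ and $C_2$ to the corner of the $4$-cycle adjacent to $y$ (staying put in case (\emph{a})), which traps the robber except for a possible flight to $x_5$, and that escape is handled exactly as in the paper by Claim~\ref{claim:Rx5bad} for cases (a)--(c) of Figure~\ref{fig:4-cycle-cases}, the corners themselves covering $x_5$ in cases (d) and (e). The only shaky point is your closing ``alternative'' ending: when the robber reaches $x_5$ with $C_2$ on $x_4$ in cases (b)/(c), the set $N(S(R))$ need not have size at most $2k-1$, so Corollary~\ref{cor:small-safe-nbhd} does not literally apply there -- but since your main argument invokes Claim~\ref{claim:Rx5bad} instead, this remark is immaterial.
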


\begin{figure}
\begin{center}
\begin{tabular}{cccc}
\begin{tikzpicture}[scale=.85]


\path(0,0) coordinate (X3);
\path(1,1) coordinate (X2);
\path(1,-1) coordinate (X4);
\path(2,0) coordinate (X1);
\path(1,0) coordinate (X5);

\foreach \i in {1,2,3,4}
{
\draw[fill] (X\i) circle (2pt);
}

\node[right] at (X1) {$x_1$};
\node at (1.25,1.25) {$x_2$};
\node[left] at (X3) {$x_3$};
\node at (1.33,-1.15) {$x_4$};
\node at (1.33,-.25) {$x_5$};

\draw (X1) -- (X2) -- (X3) -- (X4) -- (X1);

\draw[dashed] (X2) -- (X5) -- (X4);
\draw[dashed] (X1) -- (X5);

\draw[fill=white] (X5) circle (2pt);


\path(1,3.5) coordinate (U);
\path(.25,2.5) coordinate (Y);

\draw (-.5, 2.6) -- (U) -- (2.5, 2.6);

\draw[fill=white] (1,2.5) ellipse (1.5 and .5);

\draw[fill] (U) circle (2pt);
\draw[fill] (Y) circle (2pt);

\node at (.75,3.65) {$u$};
\node at (.5,2.75) {$y$};

\draw (Y) -- (X2);
\draw (Y) to [bend right] (X4);

\end{tikzpicture}

&
\begin{tikzpicture}[scale=.85]


\path(0,0) coordinate (X3);
\path(1,1) coordinate (X2);
\path(1,-1) coordinate (X4);
\path(2,0) coordinate (X1);
\path(1,0) coordinate (X5);

\foreach \i in {1,2,3,4}
{
\draw[fill] (X\i) circle (2pt);
}

\node[right] at (X1) {$x_1$};
\node at (1.25,1.25) {$x_2$};
\node[left] at (X3) {$x_3$};
\node at (1.33,-1.15) {$x_4$};
\node at (1.33,-.25) {$x_5$};

\draw (X1) -- (X2) -- (X3) -- (X4) -- (X1);

\draw[dashed] (X2) -- (X5) -- (X4);
\draw[dashed] (X1) -- (X5);

\draw[fill=white] (X5) circle (2pt);


\path(1,3.5) coordinate (U);
\path(.25,2.5) coordinate (Y);

\draw (-.5, 2.6) -- (U) -- (2.5, 2.6);

\draw[fill=white] (1,2.5) ellipse (1.5 and .5);

\draw[fill] (U) circle (2pt);
\draw[fill] (Y) circle (2pt);

\node at (.75,3.65) {$u$};
\node at (.5,2.75) {$y$};

\draw (Y) -- (X2);
\draw[dashed] (Y) to [bend right] (X5);
\draw[dashed] (X1) to [bend right] (Y);
\draw[dashed] (X3) to [bend left] (Y);

\end{tikzpicture}
&

\begin{tikzpicture}[scale=.85]


\path(0,0) coordinate (X3);
\path(1,1) coordinate (X2);
\path(1,-1) coordinate (X4);
\path(2,0) coordinate (X1);
\path(1,0) coordinate (X5);

\foreach \i in {1,2,3,4}
{
\draw[fill] (X\i) circle (2pt);
}

\node[right] at (X1) {$x_1$};
\node at (1.25,1.25) {$x_2$};
\node[left] at (X3) {$x_3$};
\node at (1.33,-1.15) {$x_4$};
\node at (1.33,-.25) {$x_5$};

\draw (X1) -- (X2) -- (X3) -- (X4) -- (X1);

\draw[dashed] (X2) -- (X5) -- (X4);
\draw[dashed] (X1) -- (X5);

\draw[fill=white] (X5) circle (2pt);


\path(1,3.5) coordinate (U);
\path(.25,2.5) coordinate (Y);

\draw (-.5, 2.6) -- (U) -- (2.5, 2.6);

\draw[fill=white] (1,2.5) ellipse (1.5 and .5);

\draw[fill] (U) circle (2pt);
\draw[fill] (Y) circle (2pt);

\node at (.75,3.65) {$u$};
\node at (.5,2.75) {$y$};

\draw (Y) to [bend right] (X4);
\draw[dashed] (Y) to  (X5);
\draw[dashed] (X1) to [bend right] (Y);
\draw[dashed] (X3) to [bend left] (Y);

\end{tikzpicture}
&

\begin{tikzpicture}[scale=.85]


\path(0,0) coordinate (X3);
\path(1,1) coordinate (X2);
\path(1,-1) coordinate (X4);
\path(2,0) coordinate (X1);
\path(1,0) coordinate (X5);

\foreach \i in {1,2,3,4}
{
\draw[fill] (X\i) circle (2pt);
}

\node[right] at (X1) {$x_1$};
\node at (1.25,1.25) {$x_2$};
\node[left] at (X3) {$x_3$};
\node at (1.33,-1.15) {$x_4$};
\node at (1.33,-.25) {$x_5$};

\draw (X1) -- (X2) -- (X3) -- (X4) -- (X1);

\draw[dashed] (X2) -- (X5) -- (X4);
\draw[dashed] (X1) -- (X5);

\draw[fill=white] (X5) circle (2pt);


\path(1,3.5) coordinate (U);
\path(.25,2.5) coordinate (Y);

\draw (-.5, 2.6) -- (U) -- (2.5, 2.6);

\draw[fill=white] (1,2.5) ellipse (1.5 and .5);

\draw[fill] (U) circle (2pt);
\draw[fill] (Y) circle (2pt);

\node at (.75,3.65) {$u$};
\node at (.5,2.75) {$y$};

\draw[dashed] (Y) to  (X5);
\draw[dashed] (X1) to [bend right] (Y);
\draw[dashed] (X3) to [bend left] (Y);

\end{tikzpicture}
\\
(A) & (B1) & (B2) & (B3)
\end{tabular}

\caption{The four classes of possible structures of $G$ for Claim \ref{claim:good-state}. Vertex $x_5$  might not be present, and dashed edges might not be present.}
\label{fig:4cycle-continued}

\end{center}
\end{figure}
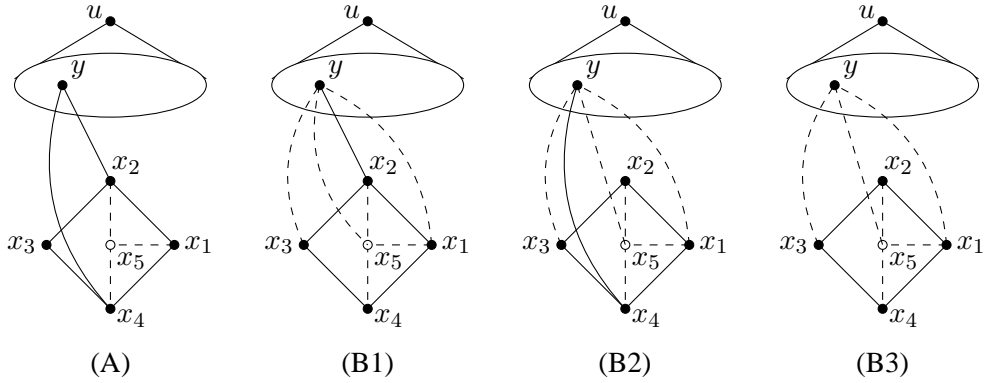

\begin{proof}
Figure \ref{fig:4cycle-continued} shows the four classes of possible graph structures. 
Let us first consider the structure (B1). Let $z=x_2$.
$C_1$ moves to $u$, and $C_2$ moves
to $z$. Now the robber is trapped in all cases of Figure \ref{fig:4-cycle-cases} except Case (a). In Case (a) the robber's only move is to $x_5$. After this move,  the robber can be caught by Claim~\ref{claim:Rx5bad}.
The same cop strategy works for structures (B2) and (B3), taking $z=x_4$. A simplified version of this proof shows that the same cop strategy works for structure (A), taking $z=c_2$.
\end{proof}

\noindent\textsc{Remark:} in Cases (d) and (e) of Figure \ref{fig:4-cycle-cases}, $x_5$ and $x_1$ are
symmetric, so the statement holds also for configuration $\copst{\cN(u), x_5}{y}$

Our next observation concerns the situation where there are two nodes in $N(u)$
that do not satisfy the condition of the previous claim.

\begin{claim}\label{claim:2-dominating}
If there are two vertices $y, z\in N(u)$ such that $ \{ x_2, x_3,x_4 \} \subset N(y)$, and 
$\{ x_1, x_2, x_4 \} \subset N(z)$, then $c(G)\leq 2$.
\end{claim}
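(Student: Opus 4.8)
The plan is to use one cop to pin the robber inside $H$ and the other to crush him against a single corner, exploiting the fact that both $x_2$ and $x_4$ are common neighbours of $y$ and of $z$. Concretely, I would keep $C_1$ on $u$ throughout, so that the robber's safe neighbourhood is forced into $G[V-\overline{N}(u)]=H$, and place $C_2$ on the corner $x_4$. The point of choosing $x_4$ is that $x_4\sim x_1,x_3,y,z$ (the first two because $x_4$ lies on the cycle, the last two by hypothesis), so $\overline{N}(x_4)\supseteq\{x_1,x_3,x_4,y,z\}$. Together $C_1$ and $C_2$ therefore dominate every vertex of $G$ except $x_2$ --- and, in Cases (b) and (c), possibly $x_5$. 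The robber is thus driven either to $x_2$ or to $x_5$.

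The vertex $x_5$ is disposed of by the earlier end-game results: since $C_2$ already sits on a vertex of $V(H)$, Claim~\ref{claim:Rx5bad} finishes Cases (a)--(c), while in Cases (d),(e) we have $x_5\sim x_4$, so $x_5$ is already dominated by $C_2$ and the robber cannot be there. This leaves the robber at $x_2$, with $S(R)=\{x_2\}$. Now every neighbour of $x_2$ is either $x_1$ or $x_3$, both in $\overline{N}(x_4)$, or else lies in $N(u)\subseteq\overline{N}(u)$; in the generic situation where $x_2$ has no neighbour in $N(u)$ other than $y$ and $z$, the single cop $C_2$ at $x_4$ dominates the entire boundary $N(x_2)=\{x_1,x_3,y,z\}$. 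I would then finish by Observation~\ref{obs:key}(\ref{key:empty}), with $C_1$ as the active cop: since $[S(R):N'_C(c_1)]=\emptyset$ and $G[S(R)]$ is a single cop-win vertex, $C_1$ is free to leave $u$ and walk $u\to y\to x_2$ while $C_2$ holds the boundary, capturing the robber. (By the symmetry $x_2\leftrightarrow x_4$, parking $C_2$ on $x_2$ instead traps the robber at $x_4$, so either corner may be used.)

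The step I expect to be the real obstacle is precisely the ``generic situation'' assumption above: $x_2$ may have a \emph{private} neighbour $w\in N(u)$ with $w\not\sim x_4$. Then $x_4$ no longer dominates all of $N(x_2)$, and the moment $C_1$ steps off $u$ the robber can escape $x_2\to w$ into $N(u)$ rather than be trapped. To handle this I would exploit that $u$ is a hub adjacent to all of $N(u)$: returning $C_1$ to $u$ instantly re-confines the robber, and by instead keeping the second cop on a cycle corner $x_1$ or $x_3$ I can appeal to Claim~\ref{claim:good-state} as soon as the robber occupies such a $w$, provided $w$ is adjacent to at most one of $x_2,x_4$. If $w$ is adjacent to both $x_2$ and $x_4$ together with a further corner, then $w$ is itself of the same type as $y$ or $z$, and the identical pincer can be run with $w$ playing that role. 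Verifying that the robber's reachable region strictly shrinks through these escapes --- and invoking Corollary~\ref{cor:small-safe-nbhd} to close out the sub-cases where the relevant safe neighbourhood and its boundary are both small --- is the bookkeeping that makes this the delicate part of the proof.
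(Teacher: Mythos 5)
Your opening gambit is sound but it only settles the easy half of the claim. Placing the cops at $(u,x_4)$ does confine the robber to $x_2$ (or to $x_5$ in Cases (a)--(c), correctly dispatched by Claim~\ref{claim:Rx5bad}), and in the ``generic situation'' where every $N(u)$-neighbour of $x_2$ is adjacent to $x_4$, Observation~\ref{obs:key}(\ref{key:empty}) finishes exactly as you say. Two corrections even here: in Cases (b),(c) the safe neighbourhood is $S(R)=\{x_2,x_5\}$, not $\{x_2\}$ (ruling out $x_5$ as a \emph{starting} vertex does not remove it from $S(R)$), so your application of Observation~\ref{obs:key} also needs every $N(u)$-neighbour of $x_5$ to lie in $\overline{N}(x_4)$ --- the non-generic situations are broader than you acknowledge. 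But the hypothesis of Claim~\ref{claim:2-dominating} gives no control whatsoever over $N(x_2)\cap N(u)$ beyond the two vertices $y,z$, so the non-generic case is not a boundary nuisance: it is the entire content of the claim, and your treatment of it has a genuine hole.

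Concretely, two things fail in your fallback. First, your assertion that an escape vertex $w\in N(u)$ adjacent to both $x_2$ and $x_4$ plus a further $H$-vertex ``is itself of the same type as $y$ or $z$'' is false in Case (d): there $w$ can be adjacent to exactly $x_2,x_4,x_5$ among the $H$-vertices, which is neither type $y$ (requiring $x_3$) nor type $z$ (requiring $x_1$). This is precisely the configuration that forces the paper to split Case (d) into two sub-cases according to whether such a $w$ exists, with \emph{different} initial cop placements ($(u,z)$ versus $(u,y)$) in the two sub-cases. Second, you have no termination argument: after the robber escapes $x_2\to w$ and you regroup to $(u,x_3)$, the robber can re-enter $H$ at $x_1$ (or $x_5$), and the ``identical pincer'' cannot be re-run there, because the hypothesis is not symmetric under the rotation $x_2\mapsto x_1$, $x_4\mapsto x_3$ --- the required type-$y$ and type-$z$ vertices for the rotated cycle need not exist. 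The robber's reachable region does not strictly shrink (he can simply return to $x_2$ when you reset), so the asserted progress measure is unproven and, as described, untrue. The paper avoids this oscillation entirely: its placements $(u,z)$ and $(u,y)$ trap the robber within one or two moves in all cases except the hard sub-case of (d), where a single forced chase (cop to $v\in N(x_5)\cap N(u)$, partner to $x_3$) drives the robber onto a vertex $w$ that provably fails to dominate $\{x_2,x_4\}$, at which point Claim~\ref{claim:good-state}(\ref{case:one}) closes the game. To repair your proof you would need to replicate essentially that analysis.
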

\begin{proof}
First we deal with all cases but Case (d). The cops start at $u$ and
$z$. If the robber starts at $x_3$, the cops' winning strategy is:
$\copst{u, z}{x_3}\copmove \robst{u, y}{x_3}\copwin$.  If the robber
starts at $x_5$, the strategy will depend on the structure of $H$. In
Cases (a),(b),(c), we are done by Claim~\ref{claim:Rx5bad}. In Case
(e), the following is a winning strategy: $\copst{u, z}{x_5}\copmove
\robst{u, x_1}{x_5}\copwin$.

The remainder of the proof deals with Case (d), which requires a more
involved argument.

First suppose that there exists $w\in N(u)$ such that $\{x_2, x_4, x_5\}\subset N(w)$. Then the cops start at $u$ and $z$. The robber can
start at $x_3$ or $x_5$ in either case the cops have a winning
strategy: $\copst{u, z}{x_3}\copmove \robst{y,
  u}{x_3}\copwin$; or $\copst{u, z}{x_5}\copmove \robst{w,
  u}{x_5}\copwin$.

Now assume that no such $w$ exists. Start the cops at $u$ and $y$. The
robber starts in  $\{ x_1, x_5\}$. If the robber starts at $x_1$,
then $\copst{u, y}{x_1}\copmove\robst{y, u}{x_1}\copwin$. So we may assume the
robber starts at $x_5$. If $|N(x_5)\cap N(u)|\leq 1$, we are done by
Corollary~\ref{cor:small-safe-nbhd}. Otherwise, the cops move by $\copst{u,
x_3}{x_5}\copmove \robst{v, z}{x_5}$, for some $v\in N(x_5)\cap N(u)$. The
robber is forced to move to some $w\in N(x_5)\cap N(u)$ (if no such
$w$ exists, then $R$ is trapped). By our initial argument, $w$ cannot
be adjacent to both $x_2$ and $x_4$, but then the state satisfies
the conditions of Claim~\ref{claim:good-state}(\ref{case:one}), and we are done.
\end{proof}

%

\begin{claim}\label{claim:x1-N(u)}
Either $c(G)\leq 2$, or we can relabel the vertices of $H$ via an
automorphism of $H$ so that $x_1$ is adjacent to $N(u)$. 
\end{claim}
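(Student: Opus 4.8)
The plan is to combine a one-line connectivity argument with the end-game corollaries of Section~2. First I would note that every vertex of $H = G[V-\cN(u)]$ lies outside $\cN(u)$, so the only edges joining $V(H)$ to the rest of $G$ have their other endpoint in $N(u)$. Connectivity of $G$ therefore forces at least one vertex of $H$ to be adjacent to $N(u)$. The whole task is to arrange, via an automorphism of $H$, that such a \emph{boundary} vertex occupies the role of $x_1$, or else to show $c(G)\le 2$ directly.

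Next I would determine, in each of the five cases of Figure~\ref{fig:4-cycle-cases}, precisely which vertices can be relabeled to $x_1$ — namely the orbit of $x_1$ under $\mathrm{Aut}(H)$ — by reading off the (small) automorphism group. In Case~(a) all four cycle vertices form one orbit, so any boundary cycle vertex can be moved to $x_1$; moreover, if the possible extra vertex $x_5$ were isolated in $H$, then a boundary vertex \emph{must} be a cycle vertex (otherwise the cycle would be a separate component), so Case~(a) is immediate. In Cases~(b)--(e) the orbit of $x_1$ is a proper subset of $V(H)$ (e.g. the two cycle-neighbors of the degree-$3$ hub in (b), the two degree-$3$ vertices in (c), the size-$3$ side of $K_{2,3}$ in (d), and the true-twin pair in (e)); whenever some vertex of that orbit is adjacent to $N(u)$, I relabel by the corresponding automorphism and obtain $x_1\sim N(u)$.

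The remaining situation — no $x_1$-eligible vertex of $H$ is adjacent to $N(u)$ — is where the real work lies, and here I would exhibit an explicit two-cop win. Park $C_1$ at $u$; this confines the robber to $H$, since any step into $N(u)$ is punished on the next cop move. By the connectivity step there is still a boundary vertex $w$, necessarily outside the $x_1$-orbit, and in each case $w$ (or the hub it is attached to) has a closed neighborhood that dominates most of $H$. I would send $C_2$ into $H$ through $N(u)$ to this dominating vertex; in every case this shrinks $S(R)$ to a single vertex, an edge, or a short path of maximum degree at most $2$ (with at most one degree-$3$ vertex), at which point Corollary~\ref{cor:small-safe-nbhd} or Corollary~\ref{cor:one-deg-3} finishes the pursuit. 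The main obstacle is this last step: it requires a per-case check that, for whichever non-eligible boundary vertex connects $H$ to $N(u)$, the second cop can reach the right dominating vertex while $C_1$ holds the robber inside $H$, leaving a residual safe neighborhood covered by the corollaries.
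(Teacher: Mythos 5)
Your preparatory steps are correct: connectivity of $G$ does force some vertex of $H$ to be adjacent to $N(u)$, your identification of the $\mathrm{Aut}(H)$-orbit of $x_1$ in each of the five cases is right, and your observation that Case (a) is immediate (otherwise the $4$-cycle would be a connected component of $G$) is a nice shortcut the paper does not use. The gap is that the substance of the claim --- producing a two-cop win in Cases (b)--(e) when no orbit vertex meets $N(u)$ --- is exactly what you postpone to ``a per-case check,'' and the strategy you sketch for that check fails. Consider Case (b) with $C_1$ parked at $u$ and $C_2$ at the hub $x_2$, as you propose. The robber then starts at $x_4$, which is \emph{not} in the orbit of $x_1$ and hence is allowed to be a boundary vertex; suppose it has two neighbors in $N(u)$ that are not adjacent to $x_2$. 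Then $\deg_G(x_4)\geq 4$, so Corollary~\ref{cor:one-deg-3} does not apply; $|N(S(R))|\geq 4 > 3 = 2k-1$, so Corollary~\ref{cor:small-safe-nbhd} does not apply; and $N(S(R))$ meets $N'_C(x_2)$ in $\{x_1,x_3\}$ and $N'_C(u)$ in at least two vertices of $N(u)$, so neither alternative of Observation~\ref{obs:key} applies either. Concretely, if $C_2$ steps off $x_2$ toward $x_4$ the robber circles the $4$-cycle, and if $C_1$ steps off $u$ the robber escapes into $N(u)$. The same defect recurs in Cases (c), (d), (e): a ``dominating vertex'' placement always leaves uncovered precisely a non-orbit vertex of $H$, i.e.\ the one vertex that may have arbitrarily many edges into $N(u)$.

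The paper's proof makes the opposite placement, and that is the idea your sketch is missing. It starts $C_2$ at $x_3$, so that the robber's only possible starting vertices are $x_1$ and $x_5$: in Cases (d) and (e) both lie in the orbit of $x_1$, and in every case $x_1$ does, so by the standing assumption these vertices have \emph{no} edges to $N(u)$. That is what yields $|N(S(R))|\leq 2$ and lets Corollary~\ref{cor:small-safe-nbhd} (or Observation~\ref{obs:key}, for a robber on $x_5$ in Cases (a), (b), whose $H$-neighborhood is already inside $\cN(x_3)$) finish the game. The one genuinely delicate situation, the robber in Case (c), is dispatched by Claim~\ref{claim:Rx5bad} together with a single explicit cop move to a vertex of $N(u)\cap N(x_5)$. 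In short, the winning placement deliberately leaves the escape-less orbit vertices as the robber's only refuge, rather than maximizing coverage of $H$; without this idea, or some other completed case analysis, your argument does not go through.
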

\begin{proof} Suppose that no such relabeling exists. We will show a
winning strategy for the cops, starting at $u$ and $x_3$. In Cases (a)
and (b) the claim follows from Corollary~\ref{cor:small-safe-nbhd}
(either $S(R) = \{x_1\}$ or $S(R) = \{x_5\}$). In Cases (d) and (e), $S(R)\subseteq\{x_1, x_5\}$,
and we are assuming that both $x_1$ and $x_5$ have no edges to $N(u)$;
hence $|N(S(R))|\leq 2$, and we are again done by
Corollary~\ref{cor:small-safe-nbhd}. In Case (c) $S(R)=\{x_1, x_5\}$,
and we are assuming that both $x_1$ and $x_2$ do not have neighbors in
$N(u)$. By Claim~\ref{claim:Rx5bad}, we may assume $R$ does not start
at $x_5$, and so $R$ starts at $x_1$. Let $v\in N(u)\cap N(x_5)$ (if
$x_5\nsim N(u)$, then $N(S(R))$ is dominated by $c_2=x_3$). Now the
cops can win by following the strategy: $\copst{u, x_3}{x_1}\copmove
\copst{v, x_3}{x_1}\copwin$.
\end{proof}

Armed with the above claims, we now conclude the proof
Lemma~\ref{lemma:gen-5-cycle}.

By Claim~\ref{claim:x1-N(u)}, we may assume $x_1\sim w\in N(u)$. Initially
place $C_1$ at $u$ and $C_2$ at $x_1$. The robber could start at $x_3$
or, in Cases (a), (b), and (d), at $x_5$. If the robber starts at
$x_5$ in Cases (a) and (b), then we are done by
Claim~\ref{claim:Rx5bad}. In Case (d), $x_5$ and $x_3$ are symmetric,
so without loss of generality, $r = x_3$, and the initial state is
$\copst{u, x_1}{x_3}$.

If $x_3\nsim N(u)$, then the cops win by Corollary~\ref{cor:small-safe-nbhd}.
Otherwise let $v\in N(x_3)\cap N(u)$. Then $C_1$ moves from $u$ to
$v$, while $C_2$ remains fixed at $x_1$, forcing $R$ to some $y\in
N(u)\cap N(x_3)$, with $y\nsim v, y\nsim x_1$.  If no such $y$ exists, then $R$ is
trapped. If $y$ is adjacent to only one of $x_2$ or $x_4$, we are in
the state $\copst{v\in N(u), x_1}{y}$, which satisfies the conditions of
Claim~\ref{claim:good-state}(\ref{case:one}), and hence the cops have
a winning strategy.

Otherwise $y$ is adjacent to $x_2, x_3$, and $x_4$. The cops move
$\copst{v, x_1}{y}\copmove \robst{x_3, w}{y}$, for some $w\in
N(x_1)\cap N(u)$. If $y\sim x_5$, and $R$ moves to $x_5$, then the
cops win: in Cases (a),(b),(c) we are done by
Claim~\ref{claim:Rx5bad}; in Case (d), $\copst{x_3, w}{x_5}\copmove
\robst{y, u}{x_5}\copwin$; in Case (e), the cops can adopt a different
strategy from the beginning: $\copst{u, y}{x_1}\copmove \robst{u,
  x_5}{x_1}\copwin$. The only other option is for $R$ to move to some
$z\in N(u)$, $z\nsim x_3$. So the state is $\copst{x_3, w}{z}$. Either
the pair $y, z$ satisfies the conditions of
Claim~\ref{claim:2-dominating}, or the current state satisfies the
conditions of Claim~\ref{claim:good-state}(\ref{case:one}) or
(\ref{case:both}).  In either case, we are done. This concludes the
proof of Lemma \ref{lemma:gen-5-cycle}.
\end{proof}

We now state some quick but useful consequences of
Lemma~\ref{lemma:gen-5-cycle}.

\begin{corollary}
\label{cor:deg4b}
Let $G$ be a graph on $n$ vertices. If there is a vertex $u\in V$ of
degree at least $n-6$, and a vertex $v\in V-\cN(u)$ such that
$|N(v)-N(u)|\geq 3$, then $c(G) \leq 2$.  \endproof
\end{corollary}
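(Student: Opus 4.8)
The plan is to argue by contraposition against Lemma~\ref{lemma:gen-5-cycle}. Set $H = G[V - \cN(u)]$. Since $\deg(u) \geq n-6$, we have $|V - \cN(u)| = n - 1 - \deg(u) \leq 5$, and Lemma~\ref{lemma:gen-5-cycle} tells us that either $c(G) \leq 2$ (in which case we are done) or $H$ is a $5$-cycle. It therefore suffices to show that the hypothesis $|N(v) - N(u)| \geq 3$ rules out the possibility that $H$ is a $5$-cycle.

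The key observation is that for any vertex $v \in V - \cN(u)$, the set $N(v) - N(u)$ is precisely the neighborhood of $v$ inside $H$. Indeed, since $v \not\sim u$, no neighbor of $v$ equals $u$; hence any neighbor $w$ of $v$ with $w \notin N(u)$ satisfies $w \notin \cN(u)$, i.e.\ $w \in V - \cN(u)$, so $w$ is a neighbor of $v$ in $H$. Conversely, every neighbor of $v$ in $H$ lies in $V - \cN(u)$ and is thus absent from $N(u)$. This establishes $N(v) - N(u) = N_H(v)$, where $N_H(v)$ denotes the neighborhood of $v$ in the induced subgraph $H$.

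With this identity in hand, suppose for contradiction that $H$ is a $5$-cycle. Then every vertex of $H$, including $v$, has degree exactly $2$ in $H$, so $|N(v) - N(u)| = |N_H(v)| = 2$, contradicting the hypothesis $|N(v) - N(u)| \geq 3$. Hence $H$ is not a $5$-cycle, and Lemma~\ref{lemma:gen-5-cycle} forces $c(G) \leq 2$.

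There is no substantial obstacle here; the corollary is essentially a direct reading of the dichotomy in Lemma~\ref{lemma:gen-5-cycle}. The only point that requires care is the identification $N(v) - N(u) = N_H(v)$, which relies crucially on the fact that $v \not\sim u$, so that $u$ never contributes to $N(v)$ and the only neighbors of $v$ outside $N(u)$ are those within $V - \cN(u)$.
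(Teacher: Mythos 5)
Your proof is correct and follows exactly the paper's argument: the hypothesis $|N(v)-N(u)|\geq 3$ means $v$ has at least three neighbors in $G[V-\cN(u)]$, which rules out the $5$-cycle alternative in Lemma~\ref{lemma:gen-5-cycle}. The paper states this in one line; your version simply spells out the identification $N(v)-N(u)=N_H(v)$ that the paper leaves implicit.
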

\begin{proof}
$v$ has three neighbors in $G[V-\cN(u)]$, and hence $G[V-\cN(u)]$ cannot be a 5-cycle.
\end{proof}

\begin{corollary}~\label{cor:deg4deg3}
Let $G$ be a graph on $n$ vertices. If there is a vertex $u$ of degree
at least $n-6$ and a vertex  $v \in V -\cN(u)$ with $\deg(v) \leq 3$, then
$c(G)\leq 2$.
\end{corollary}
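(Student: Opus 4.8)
The plan is to invoke Lemma~\ref{lemma:gen-5-cycle} as the first step: either $c(G)\leq 2$ (and we are done), or $H:=G[V-\cN(u)]$ is an induced $5$-cycle. So assume the latter. Since $V(H)=V-\cN(u)$, the given vertex $v$ lies on this cycle; fix a cyclic labelling $x_1x_2x_3x_4x_5$ with $v=x_1$. The point of the hypothesis $\deg(v)\leq 3$ is that $x_1$ already has two neighbours on the cycle ($x_2$ and $x_5$), so it has \emph{at most one} neighbour $a\in N(u)$ and no neighbour in $\{u,x_3,x_4\}$. This single fact is what we must leverage, exactly as Corollary~\ref{cor:deg4b} leverages the complementary fact that a cycle vertex cannot have three neighbours inside $H$.

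Next I would exhibit a two-cop win. Send $C_1$ to $u$; from then on $C_1$ guards all of $N(u)$, so the robber can never safely step off the cycle (any move into $N(u)$ is caught by $C_1$ on the reply), and the robber is confined to $H$. Then send $C_2$ to $x_3$: because $\cN(x_3)\supseteq\{x_2,x_3,x_4\}$, this one cop simultaneously guards \emph{both} cycle-neighbours $x_2,x_4$, pinning the robber to the stable edge $\{x_1,x_5\}$, on which he can only oscillate. (The symmetric placement $C_2\to x_4$ instead confines the robber to $\{x_1,x_2\}$, and I would keep both options available.) The induced graph on $\{x_1,x_5\}$ is a single edge, hence cop-win with every vertex no-backtrack-winning, so the endgame is set up for Observation~\ref{obs:key} or Corollary~\ref{cor:small-safe-nbhd}.

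The clean case is when the partner vertex $x_5$ has no neighbour in $N(u)$: then $N(\{x_1,x_5\})\subseteq\{x_2,x_4,a\}$ has size at most $3=2k-1$, so Corollary~\ref{cor:small-safe-nbhd} finishes immediately; equivalently $C_1$ then has the single boundary vertex $a$ and Observation~\ref{obs:key}(\ref{key:one}) applies with connector $a$ on the path $a$-$x_1$-$x_5$. I expect the main obstacle to be the remaining sub-case, where the partner $x_5$ \emph{does} carry external edges: now neither cop can be freed (each has two boundary vertices into $\{x_1,x_5\}$), and a careless attempt to finish lets the robber circulate around the $5$-cycle. I would resolve this by first forcing the robber onto the low-degree vertex $x_1$ — using that $C_1$ at $u$ guards $x_5$'s external escapes, so advancing $C_2$ to $x_4$ (or stepping $C_1$ onto an external neighbour of $x_5$) leaves the robber no option but $x_1$ — and then springing the trap at $x_1$, whose only exits are $x_2,x_5$ and the \emph{single} external vertex $a$. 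The delicate point, and the crux of the whole argument, is covering the two cycle-neighbours $x_2,x_5$ of $x_1$ with one cop: this succeeds when $x_2,x_5$ share a neighbour (e.g.\ a common external vertex, producing a triangle on which Observation~\ref{obs:key}(\ref{key:one}) applies, or a degenerate configuration handled by Corollary~\ref{cor:one-deg-3}), and the residual configurations are cleared either by switching to the symmetric confinement $\{x_1,x_2\}$ or, when some cycle vertex has degree at least $n-5$, directly by Corollary~\ref{cor:deg=n-5}.
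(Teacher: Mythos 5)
Your opening matches the paper: reduce via Lemma~\ref{lemma:gen-5-cycle} to the case where $H=G[V-\cN(u)]$ is a $5$-cycle, note that $v=x_1$ has at most one neighbour $a\in N(u)$, confine the robber with one cop at $u$ and one on the cycle, and dispose of the clean case (where $x_5$ has no neighbour in $N(u)$) by Corollary~\ref{cor:small-safe-nbhd}. All of that is sound. The genuine gap is in the main case, which you yourself flag as the crux and then resolve only conditionally: when $x_5$ (and, symmetrically, $x_2$) do have neighbours in $N(u)$, none of your proposed finishes works in general. Covering $x_2$ and $x_5$ with a single cop requires them to have a common neighbour, which nothing in the hypotheses provides (and which the correct strategy does not need, since the paper covers $x_2$ and $x_5$ with \emph{different} cops); switching to the confinement $\{x_1,x_2\}$ faces the identical obstruction under the $x_2\leftrightarrow x_5$ symmetry; and a cycle vertex of degree at least $n-5$ is an extremely special configuration, not a catch-all for the residue. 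So against every strategy you have actually described, the robber simply oscillates on $\{x_1,x_5\}$ forever.

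The missing idea --- the real content of the paper's proof --- is that $C_1$ must \emph{leave} $u$ at the right moment. Write $y_i$ for a neighbour of $x_i$ in $N(u)$ (so $y_1=a$, and $y_2$ exists after relabelling so that $\deg(x_2)\geq 3$). With the robber pinned at $x_1$ and cops at $u,x_4$, the move $C_1: u\to y_2$ guards $x_2$, while $C_2$ at $x_4$ guards $x_5$; the robber's only other exit is $y_1$, and one analyzes where $y_1$ attaches. If $\deg(x_1)=2$, or $y_1\sim x_4$ (or $y_1\sim x_3$, by the $x_3/x_4$ symmetry), the robber is already trapped. Otherwise $N(y_1)\cap V(H)\subseteq \cN(x_1)$, and the cops instead play to $(y_2,x_5)$, forcing the robber onto $y_1$, and then to $(u,x_1)$, which dominates every neighbour of $y_1$ and wins. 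Your insistence that $C_1$ stay at $u$ ``from then on'' to guard all of $N(u)$ is precisely what locks you out of this maneuver: guarding all of $N(u)$ is only necessary while the robber might flee into it, and once he is cornered at $x_1$ that cop is better spent occupying $y_2$.
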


\begin{proof}
By Lemma~\ref{lemma:gen-5-cycle}, we only need to consider the case
where $G[V - \cN(u)]$ is a 5-cycle, $x_1, x_2, x_3, x_4, x_5$ (in that order). Without
loss of generality, let $\deg(x_1)\leq 3$, and $\deg(x_2)\geq 3$. For
each $i=1, \dots, 5$ such that $\deg(x_i)\geq 3$, pick some $y_i\in
N(x_i)\cap N(u)$ arbitrarily (we allow $y_i=y_j$ for $i\neq
j$). The game starts as $\copst{u,x_4}{\{x_1,x_2\}} \crmove \copst{u, \{x_3, x_4\}}{x_1}$.
First we deal with the case where $\deg(x_1)=2$ and the case where 
$\deg(x_1)=3$ and $y_1\sim x_4$. 
The cops' winning strategy for these two cases is the same: $\copst{u,
  \{x_3, x_4\}}{x_1}\crmove \copst{y_2, x_4}{x_1}\copmove \robst{x_2,
  x_4}{x_1}\copwin.$

Now we may assume that all $x_i$ have degree $3$, and hence $y_i$ exists for
all $i$. We may further assume that $x_4\neq y_1$, and, since $x_3$ and $x_4$ are symmetric, we are also done in the case $y_1\sim x_3$. 
The only remaining possibility is $N(y_1)\cap
(V - \cN(u))\subseteq \cN(x_1)$.  Since $x_3$ and $x_4$ are symmetric,
without loss of generality, the state is $\copst{u, x_4}{x_1}$. The
cops first move to $y_2$ and $x_5$, forcing the robber to $y_1$, then
in one more move, the robber is trapped at $y_1$: $\copst{y_2,
  x_5}{y_1}\copmove\robst{u, x_1}{y_1}\copwin.$
\end{proof}

These corollaries are enough to prove that every connected 9-vertex
graphs is 2 cop-win, and to show that if $v(G)=10$ and $\Delta(G)=4$
then $c(G) \leq 2$.

\begin{nonum}[{\bf Theorem \ref{thm:9-vtx-c2}}]
If $G$ is a connected graph on at most $9$ vertices, then $c(G)\leq 2$.
\end{nonum}

\begin{proof}
If $\Delta(G)\geq 4$, then we are done by
Lemma~\ref{lemma:gen-5-cycle}. If $\Delta(G) = 3$, then we are done by
Corollary~\ref{cor:deg4deg3}.
\end{proof}

\begin{lemma}\label{lemma:deg-leq-3}
If $v(G)=10$ and $\Delta(G)\geq 4$, then $c(G)\leq 2$.
\end{lemma}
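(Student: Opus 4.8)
The plan is to reduce everything to the previously established machinery, using $\Delta(G) \geq 4$ to force a vertex $u$ with $\deg(u) = 4 = n-6$ and then invoke Lemma~\ref{lemma:gen-5-cycle} together with its corollaries. Let me think about what structure we have. We have $n = 10$, so a vertex $u$ of degree $\Delta(G) \geq 4$ has degree at least $n - 6 = 4$. If $\deg(u) \geq 5 = n-5$, then Corollary~\ref{cor:deg=n-5} immediately gives $c(G) \leq 2$, so I may assume $\Delta(G) = 4$ and pick $u$ with $\deg(u) = 4$. Then $|V - \cN(u)| = 10 - 5 = 5$.

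The main step is to apply Lemma~\ref{lemma:gen-5-cycle}. Either $c(G) \leq 2$ and we are done, or $H = G[V - \cN(u)]$ is a 5-cycle, say $x_1, x_2, x_3, x_4, x_5$ in order. In the latter case I want to show $c(G) \leq 2$ anyway by exhibiting extra structure that rules out the surviving case. The key observation is a degree count on the five cycle vertices. Each $x_i$ has exactly two neighbors inside $H$ (its cycle-neighbors), and since $V - \cN(u)$ consists only of the cycle, all other neighbors of $x_i$ lie in $\cN(u)$, in fact in $N(u)$ (no $x_i$ is adjacent to $u$, since $x_i \notin \cN(u)$). Now I invoke Corollary~\ref{cor:deg4deg3}: if any $x_i$ has $\deg(x_i) \leq 3$, then $c(G) \leq 2$. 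Hence I may assume every cycle vertex has degree at least $4$, i.e.\ each $x_i$ has at least two neighbors in $N(u)$. Similarly Corollary~\ref{cor:deg4b} says that if some $x_i$ has three or more neighbors outside $N(u)$ we are done; but since each $x_i$'s only non-$N(u)$ neighbors are its two cycle-neighbors, this corollary is automatically satisfied and gives nothing new — so the real content is the degree-$\geq 4$ forcing from Corollary~\ref{cor:deg4deg3}.

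At this point every vertex of $G$ has degree at least $4$ (the cycle vertices do by the above, and $u$ and its neighbors can be handled or have degree bounded by $\Delta = 4$), yet $\Delta(G) = 4$, so $G$ is $4$-regular on $10$ vertices with a very rigid incidence pattern between the $5$-cycle and $N(u)$. The finishing move is a direct edge-count contradiction or a short explicit two-cop strategy: since $|N(u)| = 4$ and each of the five cycle vertices must send at least two edges into $N(u)$, the bipartite edge set $[V(H) : N(u)]$ has size at least $10$; but each vertex $y \in N(u)$ already uses one edge to $u$ and has degree at most $4$, so $y$ can absorb at most $3$ of these edges, giving at most $4 \cdot 3 = 12$ available slots — this is consistent, so pure counting does not close it, and I expect to need a structural/strategic argument. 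The cleanest route is to show that with $|N(u)| = 4$ there must exist two vertices $y, z \in N(u)$ whose neighborhoods in $H$ cover $\{x_1, x_2, x_3, x_4\}$ in the pattern required by Claim~\ref{claim:2-dominating}, by a pigeonhole argument on where the (at least) two incident $N(u)$-neighbors of each $x_i$ land; invoking Claim~\ref{claim:2-dominating} then yields $c(G) \leq 2$.

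The hard part will be the very last step: ruling out the $4$-regular surviving configuration. Counting alone is inconclusive (the arithmetic is feasible), so I anticipate the main obstacle is showing that the forced two-neighbors-in-$N(u)$ condition on each of the five cycle vertices, combined with $|N(u)| = 4$, necessarily produces the ``doubly dominating'' pair of Claim~\ref{claim:2-dominating} (or directly traps the robber). This is a finite pigeonhole/case analysis over how the $\geq 10$ cycle-to-$N(u)$ edges distribute among $4$ vertices of $N(u)$, and the delicate point is verifying that some $y$ dominates $\{x_2,x_3,x_4\}$ and some $z$ dominates $\{x_1,x_2,x_4\}$ (up to relabeling the cycle), which I would establish by noting that each $y \in N(u)$ has at most $3$ cycle-neighbors and that the only way to cover all $10$ required incidences with four such vertices forces two of them to hit three consecutive-style cycle vertices. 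Once that is in hand, Claim~\ref{claim:2-dominating} closes the proof.
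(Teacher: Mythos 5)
Your reduction is fine up to the point where every vertex of the $5$-cycle $H=G[V-\cN(u)]$ is forced to have degree $4$, hence at least two neighbors in $N(u)$; this matches the paper exactly. The gap is in your finishing move, and it is twofold. First, Claim~\ref{claim:2-dominating} is an internal claim in the proof of Lemma~\ref{lemma:gen-5-cycle}, established only in the setting where $G[V-\cN(u)]$ \emph{contains an induced $4$-cycle} $x_1x_2x_3x_4$ (the configurations of Figure~\ref{fig:4-cycle-cases}); its cop strategies exploit those adjacencies. In your situation $H$ is a $5$-cycle, which contains no induced $4$-cycle, so the claim's hypotheses are never met and you cannot invoke it without re-proving an analogue from scratch. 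Second, and more fatally, the pigeonhole assertion you lean on is false. Take $N(u)=\{y_1,y_2,y_3,y_4\}$ with $y_1\sim\{x_1,x_2,x_4\}$, $y_2\sim\{x_2,x_3,x_5\}$, $y_3\sim\{x_3,x_4\}$, $y_4\sim\{x_1,x_5\}$ and no edges inside $N(u)$: every cycle vertex gets exactly two neighbors in $N(u)$ (so all ten required incidences are covered, all degrees are at most $4$), yet \emph{no} vertex of $N(u)$ is adjacent to three consecutive cycle vertices, so no pair $y,z$ exhibits the pattern you need. This configuration is in fact $2$-cop-win, but your argument cannot detect that, so the proof does not close.

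The paper's actual route uses the same edge count but extracts something different from it: pigeonhole gives a single $v\in N(u)$ with at least three neighbors on the cycle, whence $\deg(v)=4$ and $N(u)\cap N(v)=\emptyset$. The key idea you are missing is to apply Lemma~\ref{lemma:gen-5-cycle} \emph{a second time, to $v$}: now both $G[V-\cN(u)]$ and $G[V-\cN(v)]$ must be induced $5$-cycles, and only two joint structures are possible (Figure~\ref{fig:deg4c}). In each of them the rigidity of the two overlapping $5$-cycles forces some vertex outside one of the closed neighborhoods to have degree $3$ ($z_1$ in structure (a); $x_3$ in structure (b)), and Corollary~\ref{cor:deg4deg3} then gives $c(G)\leq 2$. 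Note that in my counterexample above this is exactly what happens: $G[V-\cN(y_1)]$ is a path, not a $5$-cycle, so Lemma~\ref{lemma:gen-5-cycle} applied to $y_1$ finishes immediately. If you want to salvage your write-up, replace the ``two dominating vertices'' step with this second application of Lemma~\ref{lemma:gen-5-cycle} and the ensuing two-case structural analysis.
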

\begin{proof}
Let $u\in V(G)$ have degree at least $4$. By
Lemma~\ref{lemma:gen-5-cycle}, either $c(G)\leq 2$ or $\deg(u)=4$, and
$G[V - \cN(u)]$ is a 5-cycle. Now, by Corollary~\ref{cor:deg4deg3},
either $c(G)\leq 2$, or every $u\in V - \cN(u)$ has $\deg(u)\geq 4$. In
the latter case, $|[N(u):V-\cN(u)]|\geq 10$, thus, by the pigeon hole
principle, there exists $v\in N(u)$ such that $|N(v)\cap (V- \cN(u))|\geq
3$. We now deal with this case, namely $u$ and $v$ have degree $4$,
and $N(u)\cap N(v) = \emptyset$.

By Lemma~\ref{lemma:gen-5-cycle}, both $G[V(G)-\cN(u)]$ and
$G[V(G)-\cN(v)]$ are $5$-cycles. The resulting graph
structure must be one of the two shown in Figure \ref{fig:deg4c}.
Considering the structure in Figure \ref{fig:deg4c}(a), we note that
$\deg(z_1) = \deg(z_2)=3$ in order to maintain the induced 5-cycle
structures, and hence we are done by Corollary~\ref{cor:deg4deg3}.

Now suppose that $G$ has the structure in Figure
\ref{fig:deg4c}(b). In this case we show $\deg(x_3)=3$, and we are
again done by Corollary~\ref{cor:deg4deg3}. To show that
$\deg(x_3)=3$, we look at each potential additonal edge, and show that
$V - \cN(x_3)$ is not a 5-cycle, and hence we are done by
Lemma~\ref{lemma:gen-5-cycle}. We only need to consider edges to $y_1,
y_2$ or $y_3$: other potential edges would not maintain the induced
5-cycle structure. We have $x_3 \nsim y_1$ because $\{v, y_2, y_3 \}$
form a triangle. We have $x_3 \nsim y_3$ because $z_1$ is adjacent to
each of $x_1, y_1, y_2$. Finally, $x_3 \nsim y_2$ because the
existence of this edge would force $y_3 \sim x_1$, which is symmetric
to the forbidden $x_3 \sim y_1$.
\end{proof}

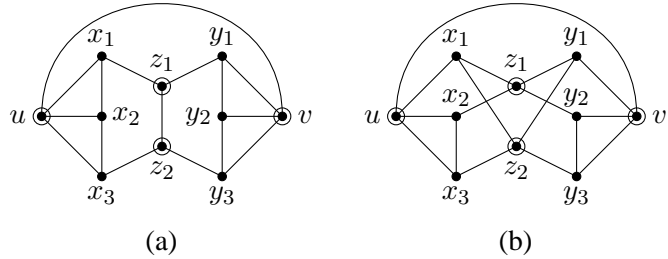
\begin{figure}[ht]
\begin{center}
\begin{tabular}{cc}

\begin{tikzpicture}[scale=.8]

\path (0,0) coordinate (U);
\path (4,0) coordinate (V);

\path (2,-.5) coordinate (Z2);
\path (2,.5) coordinate (Z1);

\path (1,-1) coordinate (X3);
\path (1,0) coordinate (X2);
\path (1,1) coordinate (X1);

\path (3,-1) coordinate(Y3);
\path (3,0) coordinate (Y2);
\path (3, 1) coordinate (Y1);

\draw[fill] (U) circle (2pt);
\draw[fill] (V) circle (2pt);

\draw (U) circle (4pt);
\draw (V) circle (4pt);

\node[left=2pt] at (U) {$u$};
\node[right=2pt] at (V) {$v$};

\foreach \i in {1,2,3}
{
\draw[fill] (Y\i) circle (2pt);
\draw (V) -- (Y\i);

\draw[fill] (X\i) circle (2pt);
\draw (U) -- (X\i);
}

\node[above] at (X1) {$x_1$};
\node[above] at (Y1) {$y_1$};

\node[right] at (X2) {$x_2$};
\node[left] at (Y2) {$y_2$};

\node[below] at (X3) {$x_3$};
\node[below] at (Y3) {$y_3$};

\foreach \i in {1, 2}
{
\draw[fill] (Z\i) circle (2pt);
\draw (Z\i) circle (4pt);
}

\node[above=2pt] at (Z1) {$z_1$};
\node[below=2pt] at (Z2) {$z_2$};
\draw (Z1) -- (X1) -- (X2) -- (X3) -- (Z2) -- (Y3) -- (Y2) -- (Y1) -- (Z1) -- (Z2);

\path (0,2.5) coordinate (CU);
\path (4,2.5) coordinate (CV);
\draw (U) .. controls (CU) and (CV) .. (V);

\end{tikzpicture}

&
\begin{tikzpicture}[scale=.8]

\path (0,0) coordinate (U);
\path (4,0) coordinate (V);

\path (2,-.5) coordinate (Z2);
\path (2,.5) coordinate (Z1);

\path (1,-1) coordinate (X3);
\path (1,0) coordinate (X2);
\path (1,1) coordinate (X1);

\path (3,-1) coordinate(Y3);
\path (3,0) coordinate (Y2);
\path (3, 1) coordinate (Y1);

\draw[fill] (U) circle (2pt);
\draw[fill] (V) circle (2pt);

\draw (U) circle (4pt);
\draw (V) circle (4pt);

\node[left=2pt] at (U) {$u$};
\node[right=2pt] at (V) {$v$};

\foreach \i in {1,2,3}
{
\draw[fill] (Y\i) circle (2pt);
\draw (V) -- (Y\i);

\draw[fill] (X\i) circle (2pt);
\draw (U) -- (X\i);
}

\node[above] at (X1) {$x_1$};
\node[above] at (Y1) {$y_1$};

\node[above] at (X2) {$x_2$};
\node[above] at (Y2) {$y_2$};

\node[below] at (X3) {$x_3$};
\node[below] at (Y3) {$y_3$};

\foreach \i in {1, 2}
{
\draw[fill] (Z\i) circle (2pt);
\draw (Z\i) circle (4pt);
}

\node[above=2pt] at (Z1) {$z_1$};
\node[below=2pt] at (Z2) {$z_2$};
\draw (Z1) -- (X1) -- (Z2);
\draw (Z1) -- (X2) -- (X3) -- (Z2);
\draw (Z1) -- (Y1) -- (Z2);
\draw (Z1) -- (Y2) -- (Y3) -- (Z2);

\path (0,2.5) coordinate (CU);
\path (4,2.5) coordinate (CV);
\draw (U) .. controls (CU) and (CV) .. (V);

\end{tikzpicture} \\
(a) & (b) 
\end{tabular}

\caption{The two possible starting structures in the proof of Lemma
  \ref{lemma:deg-leq-3}. Circled vertices cannot have additional edges.}

\label{fig:deg4c}

\end{center}
\end{figure}

\section{Graphs with $\Delta(G) = n-7$}

In this section, we prove Lemma \ref{lemma:n-7} and complete the proof of Theorem \ref{thm:petersen}.

\begin{nonum}[{\bf Lemma \ref{lemma:n-7}}]
Let $G$ be a graph with a vertex $u$ with $\Delta(G) = \deg(u) = n-7$ and such that
$\deg(v) \leq 3$ for every $v \in V - \cN(u)$. Then either $c(G)=2$ or the induced subgraph $G[V-\cN(u)]$ is a 6-cycle.
\end{nonum}

\begin{proof}
Let $H=G[V-\cN(u)]$ and suppose that $c(G) >2$. First, we observe that $H$ must be connected. Otherwise, we can adapt the proof of Corollary~\ref{cor:deg4deg3} to show that $c(G)=2$. Indeed, $H$ has at most one component $H_1$ whose cop number is 2. We use the strategy described in the proof of Corollary~\ref{cor:deg4deg3} to capture the robber. The only alteration of the strategy is to address the robber moving from $N(u)$ to $H - H_1$. However, $|V(H-H_1)| \leq 2$, so this component is cop-win. One cop responds by moving to $u$, while the other moves into $H-H_1$ for the win (Observation~\ref{obs:key}(a)).

Therefore, we may assume that $H$ is connected and  $c(H) \geq 2$. This means that $H$ must contain an induced $k$-cycle for $k \in {4,5,6}.$
Suppose that $G$ contains an induced 4-cycle $x_1, x_2, x_3, x_4$. Without loss of generality, $x_5 \sim x_1$, and $x_6$ is adjacent to at most three of $\{ x_2,x_3,x_4, x_5 \}$ (because we already have $\deg(x_1) =3$). Start the cops at $u$ and $x_1$, so that $S(R)$ is one of $\{ x_3 \}$, $\{ x_6 \}$ or $\{x_3, x_6\}$. In the first two cases, $\Delta(S(R)) \leq 3$ so the cops win by Corollary \ref{cor:one-deg-3}. The last option occurs when $x_3 \sim x_6$. If $x_6$ has at most one neighbor in $N(u)$, we are again done by Corollary~\ref{cor:one-deg-3}, since $\Delta(S(R)) \leq 3$. When $x_6$ has two neighbors in $N(u)$, the game play depends on the initial location of the robber. If the robber starts at $x_6$ then $C_1$ holds at $u$ while $C_2$ moves from $x_1$ to $x_2$ to $x_3$, trapping the robber. If the robber starts at $x_3$, then the roles are reversed: $C_1$ moves to $x_6$ in two steps while $C_2$ holds at $x_1$. At this point, the robber is trapped.

Next, suppose that $G$ contains an induced 5-cycle $x_1, x_2, x_3, x_4, x_5$. Without loss of generality, $x_6 \sim x_1$. If $x_6$ is adjacent to two of the $x_i$, then we can place $C_1$ at $u$ and $C_2$ at some $x_j$ so that $|N(S(R))\cap N(u)| \leq 1$, giving a cop winning position by Observation~\ref{obs:key}(b). Indeed, by symmetry there are only 2 cases to consider: if $x_6\sim x_2$, then $C_2$ starts at $x_4$ and $S(R) = \{x_1, x_2, x_6\}$; if $x_6\sim x_3$, then $C_2$ starts $x_3$, and $S(R) =\{x_1, x_5\}$.
So we may assume that $x_6$ has no additional neighbors in $H$. There are two cases to consider.
If $x_2$ and $x_4$ do not share a neighbor in $N(u)$, then our game play begins with $C_2$ chasing $R$ onto $x_2$: 
$\copst{u,x_1}{\{x_3, x_4\}} \crmove \cdots  \crmove  \copst{u, \{x_4, x_5\}}{x_2}$. If $x_2$ is not adjacent to $N(u)$, then the cops can ensure $S(R)$ satisfies Corollary \ref{cor:one-deg-3} on their next move. Indeed, $C_2$ moves to $x_4$. If $N(x_6)\cap N(u)=\emptyset$, then the situation already satisfies Corollary~\ref{cor:one-deg-3}, otherwise $C_1$ moves to $N(x_6)\cap N(u)$, and now the situation satisfies Corollary~\ref{cor:one-deg-3}. 

The final case to consider is when $x_2$ and $x_4$ are both adjacent to $y \in N(u)$. By symmetry, $x_3$ and $x_5$ are adjacent to $z \in N(u)$. By symmetry, there is one game to consider:
$\copst{u,x_1}{x_3} \crmove \copst{z,x_2}{x_4}$ which is cop-win by Corollary \ref{cor:one-deg-3}.
Thus, the only option for $H$ is an induced 6 cycle.
\end{proof}

We can now prove that the Petersen graph is the unique 3 cop-win graph of order 10.

\begin{observation}\label{fact:petersen}
The Petersen graph is the only $3$-regular graph $G$ such that for every vertex $u\in V(G)$, $G[V(G)-\cN(u)]$ is a $6$-cycle.
\end{observation}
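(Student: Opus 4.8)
The plan is to reduce the statement to the classical fact that the Petersen graph is the unique $3$-regular graph on $10$ vertices of girth $5$, equivalently the unique $(3,5)$-cage. First I would pin down the order of $G$. Since $G$ is $3$-regular, $|\cN(u)| = 4$ for every vertex $u$, and the hypothesis that $G[V - \cN(u)]$ is a $6$-cycle forces $|V - \cN(u)| = 6$; hence $v(G) = 10$. It then suffices to prove that $g(G) \geq 5$, because a $3$-regular graph of girth at least $5$ has at least $1 + 3 + 3\cdot 2 = 10$ vertices by the Moore bound, with equality exactly for a Moore graph. On $10$ vertices this forces girth precisely $5$ (girth $6$ would require at least $14$ vertices) and identifies $G$ as the unique $(3,5)$-Moore graph, the Petersen graph.

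The engine for the girth bound is the elementary remark that an induced $6$-cycle contains neither a triangle nor a $4$-cycle among its six vertices. Hence it is enough to show that any triangle, and any $4$-cycle, of $G$ can be forced to lie entirely inside some set $V - \cN(u)$, which yields an immediate contradiction. For a triangle $xyz$, each of $x,y,z$ is $3$-regular and mutually adjacent, so each has exactly one neighbour outside the triangle; thus $\cN(x) \cup \cN(y) \cup \cN(z)$ has at most $6$ vertices. Since $v(G) = 10$, I can choose $u$ outside this union, so that $u$ is distinct from and nonadjacent to each of $x,y,z$. Then $\{x,y,z\} \subseteq V - \cN(u)$, and the triangle would be an induced subgraph of the $6$-cycle $G[V - \cN(u)]$, which is impossible. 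Therefore $G$ is triangle-free.

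The $4$-cycle case is identical in spirit. Given a $4$-cycle $x_1 x_2 x_3 x_4$, each $x_i$ has exactly one neighbour off the cycle, so $\bigcup_{i=1}^{4} \cN(x_i)$ has at most $8$ vertices; choosing $u$ among the remaining (at least two) vertices places all of $x_1,x_2,x_3,x_4$ in $V - \cN(u)$. The four cycle-edges are then edges of the induced $6$-cycle $G[V - \cN(u)]$, which as a graph contains no $4$-cycle, again a contradiction. Hence $G$ has no $4$-cycle, and combined with triangle-freeness this gives $g(G) \geq 5$, completing the reduction.

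I expect the only delicate point to be the final identification step. The counting arguments forcing $g(G) \geq 5$ are routine, but passing from the numerical data (a $3$-regular graph on $10$ vertices of girth $5$) to the conclusion that $G$ is the Petersen graph relies on the uniqueness of the $(3,5)$-cage, which I would simply cite (see \cite{cage}). If a self-contained argument is preferred instead, I would reconstruct the adjacencies directly: fixing $u$ with neighbours $a_1,a_2,a_3$ and outer $6$-cycle $w_1 \cdots w_6$, a degree count shows that the $a_j$ form an independent set, each joined to exactly two of the $w_i$, and that these six edges exhaust the remaining degrees; the girth-$5$ condition then rigidly determines how the three pairs interleave around the cycle, yielding the Petersen graph up to isomorphism. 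Since this reconstruction is the only laborious part, my preference is to invoke cage uniqueness and keep the proof short.
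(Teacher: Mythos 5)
Your proof is correct, but it takes a genuinely different route from the paper. You reduce the statement to girth: the two counting arguments (any triangle has closed-neighborhood union of size at most $6$, any chordless $4$-cycle at most $8$, so either can be buried inside $V-\cN(u)$ for a suitable $u$, contradicting that an induced $6$-cycle contains no shorter cycle) correctly force $g(G)\geq 5$, and then the Moore bound on $10$ vertices pins down girth exactly $5$ and identifies $G$ as the unique $(3,5)$-cage. The paper instead gives a direct reconstruction: fix $u$ with $N(u)=\{y,z,w\}$ and outer $6$-cycle $x_1,\dots,x_6$, note each $x_i$ has exactly one neighbor in $N(u)$, and use the hypothesis at the vertices $x_1$ and $y$ to force the interleaving pattern of the matching edges, arriving at the Petersen graph with no outside input. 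What your approach buys is brevity and a conceptual link to the cage discussion in the paper's introduction; what it costs is self-containment, since the uniqueness of the $(3,5)$-cage is exactly the kind of classical fact the paper is at pains to avoid citing (the authors even remark they could have checked against McKay's list of the eighteen cubic graphs on $10$ vertices, ``but here we give a direct proof''). Your sketched fallback reconstruction from the BFS tree of a Moore graph would restore self-containment and is essentially the paper's argument in different clothing, so in the context of this paper you should either carry that sketch out in full or accept that your proof, as stated, rests on a cited external result.
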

\begin{proof}
This is easily
checked against the 18 possible 3-regular graphs of order 10 listed at
\cite{mckay}, but here we give a direct proof.
 Pick any vertex $u$. The
  complement is a 6-cycle, where every vertex is adjacent to exactly
  one vertex in $N(u)$. Let $N(u)= \{y, z, w\}$. Now pick a vertex
  $x_1$ on the 6-cycle, $x_1\sim y$. Because $V-N(x_1)$ must form a
  6-cycle, we must have that $x_3\sim w$ and $x_5\sim z$ (by symmery
  this is the only option). The only remaining edges to add are a
  matching between $x_2, x_4, x_6$ and $y, z, w$. To avoid a triangle
  in $V-N(y)$, we cannot have $x_4\sim y$ or $x_4\sim z$, hence
  $x_4\sim y$, and, by symmetry, $x_2\sim z$, and $x_6\sim w$. This is the
  Petersen graph. 
\end{proof}

\begin{nonum}[{\bf Theorem \ref{thm:petersen}}]
The Petersen graph is the unique connected graph on $10$ vertices that requires 3 cops. All other connected graphs of order $10$ are  2-cop-win.
\end{nonum}

\begin{proof}
Let $G$ be a connected graph of order 10 such that $c(G)=3$. 
We have $\delta(G) \geq 2$: otherwise the leaf $v \in V(G)$ is a dominated vertex, so $c(G)=c(G-v) \leq 2$ by Theorem
\ref{thm:9-vtx-c2}. Lemma \ref{lemma:deg-leq-3} ensures that $\Delta(G) \leq 3$. Clearly $\Delta(G)=3$ since a connected 2-regular graph is a cycle which is 2-cop-win.
  
  Suppose a vertex $u \in V(G)$ has
$\deg(u)=3$. Then, by Lemma~\ref{lemma:n-7}, $G[V - \cN(u)]$ must be a
6-cycle. If every vertex in $N(u)$ has degree 3, then $G$ is 3-regular
with $c(G)=3$, and therefore $G$ is the Petersen graph by Fact~\ref{fact:petersen}. Otherwise
there is a vertex $x_1 \in V - \cN(u)$ with $\deg(v)=2$. In the rest of
the proof we give a winning strategy for the cops in this case.

Let the 6-cycle $G[V - \cN(u)]$ be $\{ x_1, x_2, x_3, x_4, x_5, x_6 \}$. Without loss of generality, $\deg(x_1)=2$ and $\deg(x_2)=3$. Let $k=\max\{i\mid \deg(x_i)=3\}$. The initial configuration is
$\copst{u,x_4}{\{x_1,x_2,x_6\}}$. If $k \leq 5$ then the cops win by Corollary \ref{cor:one-deg-3}.
When $k=6$, our strategy depends on the initial robber location. Let $y \in N(u) \cap N(x_2)$. We either have
$\copst{u,x_4}{x_2} \crmove \copst{y,x_4}{x_1} \copmove \robst{y,x_5}{x_1}\copwin$, or
$\copst{u,x_4}{x_1} \crmove \copst{y,x_5}{x_1}\copwin$, or
$\copst{u,x_4}{x_6} \crmove \copst{u,x_5}{x_1}\ \copmove \robst{y,x_6}{x_1}\copwin$. The robber is trapped for every initial placement.
\end{proof}

\medskip
\section{Acknowledgments}
This work was undertaken at the Institute for Mathematics and its Applications: first as part of its Summer 2010 Special Program on Interdisciplinary Research for Undergraduates, and subsequently while the first two authors participated in  the IMA 2011-2012 Special Year on the Mathematics of Information. We are grateful to the IMA for its generous support. We also thank Volkan Isler and Vishal Saraswat for helpful conversations.

\bibliography{cop-bib}

\begin{thebibliography}{10}

\bibitem{aigner+fromme}
M.~Aigner and M.~Fromme.
\newblock A game of cops and robbers.
\newblock {\em Discrete Appl. Math.}, 8:1--12, 1983.

\bibitem{alspach}
B.~Alspach.
\newblock Sweeping and searching in graphs: a brief survey.
\newblock {\em Mathematiche}, 59:5--37, 2006.

\bibitem{baird+bonato}
W.~Baird and A.~Bonato.
\newblock {M}eyniel's conjecture on the cop number: a survey.
\newblock Submitted.

\bibitem{bollobas+kun}
B.~Bollobas, G.~Kun, and I.~Leader.
\newblock Cops and robbers in random graphs.
\newblock Submitted.

\bibitem{bonato+kemes+pralat}
A.~Bonato, G.~Kemkes, and P.~Pra{\l}at.
\newblock Almost all cop-win graphs contain a universal vertex.
\newblock Submitted.

\bibitem{bonato+nowakowski}
A.~Bonato and R.~Nowakowski.
\newblock {\em The Game of Cops and Robbers on Graphs}.
\newblock American Mathematical Society, 2011.

\bibitem{bonato+pralat+wang}
A.~Bonato, P.~Pra{\l}at, and C.~Wang.
\newblock Pursuit-evasion in models of complex networks.
\newblock {\em Internet Mathematics}, 4(4):419--436, 2007.

\bibitem{clarke+nowakowski}
N.~Clarke and R.~Nowakowski.
\newblock Cops, robber and traps.
\newblock {\em Utilitas Mathematica}, 60:91--98, 2001.

\bibitem{cage}
G.~Exoo and R.~Jajcay.
\newblock Dynamic cage survey.
\newblock {\em Electronic Journal of Combinatorics}, Dynamic Survey DS16, May
  2011.
\newblock http://www.combinatorics.org.

\bibitem{FKL}
A.~Frieze, M.~Krivelevich, and P.-S. Lo.
\newblock Variations on cops and robbers.
\newblock {\em J. Graph Theory}.
\newblock To appear.

\bibitem{hahn}
G.~Hahn.
\newblock Cops, robbers and graphs.
\newblock {\em Tatra Mountain Mathematical Publications}, 36:163--176, 2007.

\bibitem{lu+peng}
L.~Lu and X.~Peng.
\newblock On {M}eyniel's conjecture of the cop number.
\newblock Submitted.

\bibitem{luczak+pralat}
T.~{\L}uczak and P.~Pra{\l}at.
\newblock Chasing robbers on random gaphs: zigzag theorem.
\newblock {\em Random Structures and Algorithms}, 37:516--524, 2010.

\bibitem{mckay}
B.~McKay.
\newblock Combinatorial data.
\newblock http://cs.anu.edu.au/people/bdm/data, Dec. 2011.

\bibitem{Nowakowski+Winkler}
R.~Nowakowski and P.~Winkler.
\newblock Vertex-to-vertex pursuit in a graph.
\newblock {\em Discrete Mathematics}, 43(2--3):235 -- 239, 1983.

\bibitem{pralat}
P.~Pra{\l}at.
\newblock When does a random graph have constant cop number?
\newblock {\em Australasian Journal of Combinatorics}, 46:285--296, 2010.

\bibitem{pralat+wormald}
P.~Pra{\l}at and N.~Wormald.
\newblock {M}eyniel's conjecture holds in random graphs.
\newblock Submitted.

\bibitem{quilliot}
A.~Quilliot.
\newblock {\em Jeux et pointes fixes sur les graphes}.
\newblock PhD thesis, Universit\'e de Paris VI, 1978.

\bibitem{scott+sudakov}
A.~Scott and B.~Sudakov.
\newblock A bound for the cops and robbers problem.
\newblock {\em SIAM J. Discrete Math.}
\newblock To appear.

\end{thebibliography}

\end{document}